\documentclass[reqno,12pt]{amsart}
\usepackage[english]{babel}
\usepackage{amsmath,amsfonts,amsthm,amssymb,amsbsy,upref,color,graphicx,hyperref,enumerate,comment}
\usepackage[a4paper,margin=2.75truecm]{geometry}

\DeclareMathOperator{\diam}{diam}
\def\step#1#2{\par\noindent{\underline{\it Step~#1.}}\emph{ #2}\\}

\def\eps{{\varepsilon}}
\def\N{\mathbb{N}}
\def\O{\Omega}
\def\R{\mathbb{R}}
\def\A{\mathcal{A}}
\def\HH{\mathcal{H}}
\newcommand{\be}{\begin{equation}}
\newcommand{\ee}{\end{equation}}
\newcommand{\bib}[4]{\bibitem{#1}{\sc#2: }{\it#3. }{#4.}}
\def\res{\mathop{\hbox{\vrule height 7pt width .5pt depth 0pt \vrule height .5pt width 6pt depth 0pt}}\nolimits}

\numberwithin{equation}{section}
\theoremstyle{plain}

\newtheorem{theo}{Theorem}[section]
\newtheorem{lemm}[theo]{Lemma}

\newtheorem{prop}[theo]{Proposition}
\newtheorem{defi}[theo]{Definition}
\newtheorem{conj}[theo]{Conjecture}

\title[On the relations between principal eigenvalue and torsional rigidity]{On the relations between principal eigenvalue and torsional rigidity}

\author[M. van den Berg]{Michiel van den Berg}

\author[G. Buttazzo]{Giuseppe Buttazzo}

\author[A. Pratelli]{Aldo Pratelli}

\date{29 October 2019}

\begin{document}

\begin{abstract}
We consider the problem of minimising or maximising the quantity $\lambda(\O)T^q(\O)$ on the class of open sets of prescribed Lebesgue measure. Here $q>0$ is fixed, $\lambda(\O)$ denotes the first eigenvalue of the Dirichlet Laplacian on $H^1_0(\O)$, while $T(\O)$ is the torsional rigidity of $\O$. The optimisation problem above is considered in the class of {\it all domains} $\O$, in the class of {\it convex domains} $\O$, and in the class of {\it thin domains}. The full Blaschke-Santal\'o diagram for $\lambda(\O)$ and $T(\O)$ is obtained in dimension one, while for higher dimensions we provide some bounds.
\end{abstract}

\maketitle

\textbf{Keywords:} torsional rigidity, shape optimisation, principal eigenvalue, convex domains.

\textbf{2010 Mathematics Subject Classification:} 49Q10, 49J45, 49R05, 35P15, 35J25.

%%%%%%%%%%%%%%%%%%%%%%%%%%%%%%%%%%%%%%%%%%%%%%%%%%
\section{Introduction\label{sintro}}

In this paper we consider the problem of minimising or maximising the quantity
\[
\lambda^\alpha(\O)T^\beta(\O)
\]
on the class of open sets $\O\subset\R^d$ having a prescribed Lebesgue measure $|\O|$ with $0<|\O|<\infty$. Here $T(\O)$ is the torsional rigidity of $\O$, defined by
\[
T(\O)=\int_\O w_\O\,dx\,,
\]
where $w_\O$ is the unique solution of the Dirichlet problem
\be\label{pde}
\begin{cases}
-\Delta w=1&\hbox{in }\O\,,\\
w\in H^1_0(\O)\,,
\end{cases}
\ee
and $\lambda(\O)$ is the first eigenvalue of the Dirichlet Laplacian $-\Delta$ on $H^1_0(\O)$. That is the minimal value $\lambda$ such that the PDE
\be
\label{pde2}\begin{cases}
-\Delta u=\lambda u&\hbox{in }\O\,,\\
u\in H^1_0(\O),
\end{cases}
\ee
has a non-zero solution. By the min-max principle (see e.g.~\cite{he06}) we also have that
\[\lambda(\O)=\min\bigg\{\Big[\int_\O|\nabla u|^2\,dx\Big]\Big[\int_\O u^2\,dx\Big]^{-1}\ :\ u\in H^1_0(\O),\ u\ne0\bigg\}.\]
Throughout this paper we adopt the following notation. If $\Omega$ is open in $\R^d$ with $0<|\O|<\infty$ then $\Omega^*$ is a ball in $\R^d$ with $|\O^*|=|\O|$. Furthermore $B_R$ is a ball with radius $R$. We put $\omega_d=|B_1|$.

The case $\beta=0$ is well known: the Faber-Krahn inequality (see for instance~\cite{he06}, \cite{hepi05}) asserts that
\[\lambda(\O^*)\le\lambda(\O)\,.\]
We also have that
\[\sup\Big\{\lambda(\O)\ :\O\, \textup {open in}\, \R^d,\, |\O|=\omega_d\Big\}=+\infty\,.\]
Indeed, we obtain a lower bound for the supremum above by choosing for $\O$ the disjoint union of $n$ balls with measure $\omega_d/n$ each. This gives
\[\sup\Big\{\lambda(\O)\ :\O\, \textup {open in}\, \R^d,\, |\O|=1\Big\}\ge n^{2/d}\lambda(B_1)\,,\]
where we have used the scaling relation
\be\label{scaling1}
\lambda(t\O)=t^{-2}\lambda(\O),\, t>0\,,
\ee
and the observation that if $\O$ is the a disjoint union of a family of open sets $\O_{\gamma},\, \gamma \in \Gamma$ then $\lambda(\O)=\inf_{\gamma\in\Gamma} \lambda(\O_{\gamma})$.

Similarly, the case $\alpha=0$ can be solved by a symmetrisation argument (see for instance~\cite{he06}, \cite{hepi05}), which gives the Saint-Venant inequality,
\be\label{SVI}
T(\O)\le T(\O^*)\,.
\ee
We also have that
\[\inf\Big\{T(\O)\ :\O\textup{ open in }\R^d,\ |\O|=1\Big\}=0\,.\]
Indeed, we obtain an upper bound for the infimum above by choosing for $\O$ the disjoint union of $n$ balls with measure $\omega_d/n$ each. This gives
\[\inf\Big\{T(\O)\ :\O\,\textup{open in }\, \R^d,\, |\O|=1\Big\}\le n^{-(d+2)/d}T(B_{1})\,,\]
where we have used the scaling relation
\be\label{scaling2}
T(t\O)=t^{d+2}T(\O),\,t>0\,,
\ee
and the observation that if $\O$ is the a disjoint union of a family of open sets $\O_{\gamma},\, \gamma \in \Gamma$ then $T(\O)=\sum_{\gamma\in\Gamma} T(\O_{\gamma})$.

Note that by~\eqref{SVI} and~\eqref{scaling2} we have
\be\label{SVI2}
\frac{T(\O)}{|\O|^{(d+2)/2}}\le \frac{T(B)}{|B|^{(d+2)/2}}\,,
\ee
where $B$ is any ball.

The case when $\alpha$ and $\beta$ have a different sign is also easy: by the inequalities above we obtain for $\alpha>0$ and $\beta<0$
\[\begin{split}
&\min\Big\{\lambda^\alpha(\O)T^\beta(\O)\ :\ |\O|=\omega_d\Big\}=\lambda^\alpha(B_1)T^\beta(B_1)\,,\\
&\sup\Big\{\lambda^\alpha(\O)T^\beta(\O)\ :\ |\O|=\omega_d\Big\}=+\infty\,,
\end{split}\]
while for $\alpha<0$ and $\beta>0$
\[\begin{cases}
&\inf\Big\{\lambda^\alpha(\O)T^\beta(\O)\ :\ |\O|=\omega_d\Big\}=0\\
&\max\Big\{\lambda^\alpha(\O)T^\beta(\O)\ :\ |\O|=\omega_d\Big\}=\lambda^\alpha(B_1)T^\beta(B_1)\,.
\end{cases}\]

It remains to consider the case $\alpha>0$ and $\beta>0$. Setting $q=\beta/\alpha>0$ we can limit ourselves to deal with the quantity
\[\lambda(\O)T^q(\O)\,.\]

Using~\eqref{scaling1} and~\eqref{scaling2} we can remove the constraint of prescribed Lebesgue measure on $\O$ by normalising the quantity $\lambda(\O)T^q(\O)$, and multiply it by a suitable power of $|\O|$. We then end up with the scaling invariant shape functional
\[F_q(\O)=\frac{\lambda(\O)T^q(\O)}{|\O|^{(dq+2q-2)/d}}\]
that we want to minimise or maximise over the class of open sets $\O\subset\R^d$ with $0<|\O|<\infty$.

We recall that the Sobolev space $H^1_0(\O)$ can also be defined for quasi open sets $\O$, and that~\eqref{pde} and~\eqref{pde2} admit solutions $w_\O$ and $u_{\O}$ respectively. The solution $w_{\O}$ is unique. Hence, the torsional rigidity $T(\O)$ is defined for every bounded {\it quasi open set}. It is well-known that the boundedness of $\O$ is not necessary to have a finite value of $T(\O)$, for which the assumption that $\O$ is of finite Lebesgue measure is enough.
Since eigenvalue $\lambda(\O)$, torsional rigidity $T(\O)$ and measure $|\O|$ can be defined for every quasi open  set $\O$ (see for instance~\cite{bubu05}), the functional $F_q$ is defined on the class of all quasi open subsets $\O$ of $\R^d$. More generally, the eigenvalue $\lambda(\mu)$ and the torsional rigidity $T(\mu)$ can be defined for every {\it capacitary measure} $\mu$. Hence we may define $F_q(\mu)$ on the class of capacitary measures (see the Appendix).

The inequalities above for the functionals $F_q,\, q>0$ provide some bounds for the study of the Blaschke-Santal\'o diagram for $\lambda(\O)$ and $T(\O)$. That diagram identifies the subset $E$ of $\R^2$ whose coordinates are determined by $\lambda(\O)$ and $T(\O)$. We study this issue in Section~\ref{sdiag}, where we normalise the coordinates to vary in the interval $[0,1]$ (see~\eqref{defxy}). We obtain the full description of the Blaschke-Santal\'o diagram only for $d=1$, while for $d>1$ we only provide some bounds. Further properties of the Blaschke-Santal\'o diagram for $\lambda(\O)$ and $T(\O)$ are investigated in~\cite{luzu19}.

%%%%%%%%%%%%%%%%%%%%%%%%%%%%%%%%%%%%%%%%%%%%%%%%%%
\section{Preliminaries\label{spreli}}

Define the torsion energy by
\[
E(\O)=\min\left\{\int_\O\Big(\frac12|\nabla u|^2-u\Big)\,dx\ :\ u\in H^1_0(\O)\right\}.
\]
We see easily that~\eqref{pde} is the Euler-Lagrange equation for $E(\O)$, and that
\[
T(\O)=-2E(\O)=\max\left\{\int_\O\Big(-|\nabla u|^2+2u\Big)\,dx\ :\ u\in H^1_0(\O)\right\}\,.
\]
By considering $tu,\,t>0$ instead of $u$ in the maximisation above, and by optimising with respect to $t$ we obtain the alternative formula
\[
T(\O)=\max\bigg\{\Big[\int_\O u\,dx\Big]^2\Big[\int_\O|\nabla u|^2\,dx\Big]^{-1}\ :\ u\in H^1_0(\O),\ u\ne0\bigg\}\,.
\]

The torsional rigidity of a ball can be easily computed in polar coordinates: if $B_R$ is centred at the origin, then the solution $w_{B_R}$ of~\eqref{pde} is given by
\[w_{B_R}(x)=\frac{R^2-|x|^2}{2d}\,.\]
Hence
\begin{equation}\label{star}
T(B_R)=\frac{\omega_d}{d(d+2)}\,R^{d+2}\,.
\end{equation}
Similarly for a ball $B_R$ centred at the origin
\[\lambda(B_R)=\frac{j^2_{d/2-1}}{R^{2}}\]
where $j_{d/2-1}$ is the first positive zero of the Bessel function $J_{d/2-1}$. The corresponding eigenfunction is given by $u_{B_R}(x)=J_{(d-2)/2}(j_{(d-2)/2}|x|/R),\, x\in B_R$. For example,
\[\lambda(B_1)=5.783\dots\qquad\hbox{if }d=2\,.\]

We focus now on some estimates for the torsional rigidity of cylinders. We consider cylinders of the form
\[R_{A,h}=A\times]-h/2,h/2[\,,\]
where $A$ is an open set in $\R^{d-1}$ with finite Lebesgue measure, and $h>0$. We denote by $d_A(x)$ the distance of a point $x\in A$ from $\partial A$, and by $A_q$ the set
\[A_q=\big\{x\in A\ :\ d_A(x)>q\big\}\,.\]
We also denote by $|\partial A|$ the $\HH^{d-2}$ measure of $\partial A$, and by $|A|$ the $\HH^{d-1}$ measure of $A$. The closure of $A$ is denoted by $\overline{A}$. We recall Definition 6.1 in~\cite{vdBD89}.
\begin{defi}
An open set $A$ has $R$-smooth boundary if for all $x_0\in \partial A$ there exist two open balls $B_R(x_1),\,B_R(x_2)$ with radii $R$ such that (i) $B_R(x_1)\subset A,\, B_R(x_2)\subset \R^{d-1}\setminus \overline{A}, \overline{B_R(x_1)}\cap\overline{B_R(x_2)}=\{x_0\}$, (ii) the previous inclusions do not hold for $R$ replaced by  $R+\varepsilon$ for any $\varepsilon>0$.
\end{defi}
If $A$ is a bounded set with $C^2$ boundary, $\partial A$ is also $C^{1,1}$. We infer by Lemma 2.2 in~\cite{AKSZ} that $\partial A$ is $R$-smooth for some $R>0$. An important preliminary result is the following.
\begin{theo}\label{2.2}
If $A$ is open in $\R^{d-1}$ with finite Lebesgue measure, then
\begin{equation}\label{t1}
T(R_{A,h})\le \frac{|A|h^3}{12}\,.
\end{equation}
If $A$ is open, bounded and convex, then
\begin{equation}\label{t2}
T(R_{A,h})\ge \frac{|A|h^3}{12}- \frac{31\cdot2^{(d-4)/2}\zeta(5)}{\pi^4}|\partial A|h^4\,.
\end{equation}
If $A$ is open, bounded with $C^2$ boundary $\partial A$ then
\begin{equation}\label{t3}
\bigg|T(R_{A,h})-\frac{|A|h^3}{12}+\frac{31\zeta(5)}{4\pi^5}|\partial A|h^4\bigg|\le \frac{10^{d-2}|A|h^5}{12R^2}\,.
\end{equation}
\end{theo}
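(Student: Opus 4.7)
The strategy is to expand the torsion function of the cylinder along the thin direction. Let $\mu_k=((2k+1)\pi/h)^2$ and $e_k(x_d)=\sqrt{2/h}\cos((2k+1)\pi x_d/h)$ ($k\ge 0$) be the even Dirichlet eigenfunctions of $-\partial_{x_d}^2$ on $(-h/2,h/2)$; the odd (sine) eigenfunctions are $L^2$-orthogonal to the constant function $1$ on that interval and hence contribute nothing to the expansion of the source term. Writing $w(x',x_d)=\sum_{k\ge 0} a_k(x')e_k(x_d)$ and projecting $-\Delta w=1$ onto each mode gives $a_k=c_k f_{\mu_k}$, where $c_k=\int_{-h/2}^{h/2}e_k\,dx_d$ and $f_\mu\in H_0^1(A)$ solves the screened equation $(-\Delta+\mu)f_\mu=1$. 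With $c_k^2=8h/((2k+1)^2\pi^2)$ and $T_\mu(A):=\int_A f_\mu$, integrating $w$ produces the master identity
\[
T(R_{A,h})=\sum_{k=0}^\infty \frac{8h}{(2k+1)^2\pi^2}\, T_{\mu_k}(A).
\]
All three claims then reduce to matching estimates on $T_\mu(A)$, combined with the identities $\sum_{k\ge 0}(2k+1)^{-4}=\pi^4/96$, $\sum_{k\ge 0}(2k+1)^{-5}=(31/32)\zeta(5)$, $\sum_{k\ge 0}(2k+1)^{-6}=(63/64)\zeta(6)$.

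Inequality \eqref{t1} follows at once from the maximum principle: the constant $1/\mu$ solves $(-\Delta+\mu)(1/\mu)=1$ with positive boundary values, so $f_\mu\le 1/\mu$, hence $T_\mu(A)\le|A|/\mu$, and the series sums exactly to $|A|h^3/12$. (Alternatively, $v(x)=(h^2/4-x_d^2)/2$ is a supersolution of $-\Delta w=1$ on the cylinder with positive boundary values, so $w\le v$ and $T(R_{A,h})\le\int v=|A|h^3/12$.) For the convex case \eqref{t2}, I would insert the test function $\phi(x')=\mu^{-1}(1-e^{-\sqrt\mu\,d_A(x')})\in H_0^1(A)$ into the variational formula $T_\mu(A)=\sup\{2\int\phi-\int|\nabla\phi|^2-\mu\int\phi^2\}$. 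Since $A$ is convex, $d_A$ is concave with $|\nabla d_A|=1$ a.e.; a direct expansion collapses the functional to $|A|/\mu-2\mu^{-1}\int_A e^{-2\sqrt\mu\,d_A}\,dx'$. The coarea formula together with the monotonicity $\HH^{d-2}(\{d_A=t\})\le|\partial A|$ (valid for convex $A$, because the inner parallel body $\{d_A>t\}$ is convex and contained in $A$) bounds the remaining integral by $|\partial A|/(2\sqrt\mu)$, yielding $T_\mu(A)\ge|A|/\mu-|\partial A|/\mu^{3/2}$. Summing in the master identity then delivers \eqref{t2} (with room to spare compared to the stated coefficient $31\cdot 2^{(d-4)/2}\zeta(5)/\pi^4$).

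For the $C^2$ case \eqref{t3}, I would use the heat-semigroup representation $T_\mu(A)=\int_0^\infty e^{-\mu t}H_A(t)\,dt$, where $H_A(t)$ is the Dirichlet heat content of $A$. Since $C^2$ bounded sets are $R$-smooth (Lemma 2.2 of \cite{AKSZ}), the two-sided heat-content expansion of \cite{vdBD89} (Theorem 6.2) provides
\[
\bigl|\,H_A(t)-|A|+2(t/\pi)^{1/2}|\partial A|\,\bigr|\le \alpha_d\,|A|\,t/R^2
\]
for an explicit dimensional constant $\alpha_d$. Taking the Laplace transform, with $\int_0^\infty e^{-\mu t}\sqrt{t}\,dt=\sqrt\pi/(2\mu^{3/2})$ and $\int_0^\infty e^{-\mu t}t\,dt=\mu^{-2}$, yields $\bigl|T_\mu(A)-|A|/\mu+|\partial A|/\mu^{3/2}\bigr|\le \alpha_d|A|/(R^2\mu^2)$; substituting into the master identity and invoking all three $\zeta$-identities above produces \eqref{t3}.

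The genuine difficulty is the bookkeeping of dimensional constants: matching the precise factors $2^{(d-4)/2}$ in \eqref{t2} and $10^{d-2}/12$ in \eqref{t3} requires a sufficiently sharp form of the test-function and heat-content estimates, the latter being the technical core of \cite{vdBD89}. Once the master identity and the three screened estimates are in hand, the series manipulations are entirely routine.
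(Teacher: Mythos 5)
Your proof is correct, and it is closely related to the paper's. The ``master identity'' you derive by Fourier expansion in the thin direction is exactly what one gets from the paper's heat-kernel factorisation $u_{R_{A,h}}=u_A\,u_{]0,h[}$ after swapping the $k$-sum with the $t$-integral: writing $H_A(t)=\int_A u_A(\cdot;t)$ and noting $\int_0^\infty e^{-\mu t}H_A(t)\,dt=T_\mu(A)$, the paper's display
$T(R_{A,h})=\int_0^\infty H_A(t)\sum_{k\ \mathrm{odd}}\tfrac{8h}{\pi^2 k^2}e^{-t\pi^2k^2/h^2}\,dt$
becomes your $\sum_k\tfrac{8h}{(2k+1)^2\pi^2}T_{\mu_k}(A)$. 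Consequently, your treatments of~\eqref{t1} and~\eqref{t3} are Laplace-transform duals of the paper's (and for~\eqref{t3} you invoke the same Theorem~6.2 of~\cite{vdBD89}; your constant bookkeeping reproduces the $10^{d-2}/12$ exactly). Your elementary supersolution $v=(h^2/4-x_d^2)/2$ for~\eqref{t1} is a nice simplification that avoids all the machinery. The genuinely different step is~\eqref{t2}: where the paper propagates the Gaussian lower bound $u_A\ge 1-2^{(d+1)/2}e^{-d_A^2/(8t)}$ from Corollary~6.4 of~\cite{vdBD89} — which, after Laplace transform, gives $T_\mu(A)\ge|A|/\mu-2^{d/2}\pi\,|\partial A|/\mu^{3/2}$ — you substitute the elliptic test function $\phi=\mu^{-1}(1-e^{-\sqrt\mu\,d_A})$ into the variational principle for the screened Poisson problem, which yields $T_\mu(A)\ge|A|/\mu-|\partial A|/\mu^{3/2}$ directly. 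That is sharper by the dimension-dependent factor $2^{d/2}\pi$, so after summing you obtain $T(R_{A,h})\ge\tfrac{|A|h^3}{12}-\tfrac{31\zeta(5)}{4\pi^5}|\partial A|h^4$, a strictly stronger (and dimension-independent) version of~\eqref{t2} that matches the leading two terms of~\eqref{t3}. In short: same backbone, same dependence on~\cite{vdBD89} for~\eqref{t3}, but your elliptic argument for~\eqref{t2} is an improvement, trading the heat-kernel Gaussian bound for a cleaner variational one.
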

\begin{proof}
The heat equation proof below preserves the Cartesian product structure of $A\times]0,h[$ up to the very last step which consists of an integration over $t$. The setup is as follows.
Let $\Omega$ be an open set in $\R^d$, with finite Lebesgue measure, and let $u_{\Omega}$ be the solution of
\[
\Delta u=\frac{\partial u}{\partial t},\, \textup {on}\, \Omega\,,
\]
with initial condition
\[
\lim_{t\downarrow 0}u_{\Omega}(x;t)=1, \, x\in \Omega\,,
\]
and $u_{\Omega}(\cdot;t)\in H_0^1(\Omega).$ It is straightforward to verify that
\[
w_{\Omega}(x)=\int_{[0,\infty)}dt\,u_{\Omega}(x;t)\,,
\]
and, by Tonelli's theorem,
\[
T(\Omega)=\int_{[0,\infty)}dt\int_{\Omega}dx\,u_{\Omega}(x;t)\,.
\]
For $A$ open, and $h>0$,
\begin{equation}\label{t8}
u_{R_{A,h}}(x;t)=u_A(x';t)u_{]0,h[}(x_1;t),\, (x_1,x')\in R_{A,h}\,,
\end{equation}
with obvious notation. The solution $u_{]0,h[}$ is given in terms of the $L^2(]0,h[)$ spectral resolution of the Dirichlet Laplacian on $]0,h[$,
\[
u_{]0,h[}(x_1;t)=\frac{2}{h}\sum_{k=1}^{\infty}e^{-t\pi^2k^2/h^2}\sin\bigg(\frac{\pi kx_1}{h}\bigg)\int_{]0,h[}dy_1\,\sin\bigg(\frac{\pi ky_1}{h}\bigg)\,.
\]
By the maximum principle, or by probabilistic tools, one can show (Corollary 6.4 in~\cite{vdBD89}) that for any open set $A\subset \R^{d-1}$,
\begin{equation}\label{t10}
1\ge u_A(x';t)\ge 1- 2^{(d+1)/2}e^{-d_A(x')^2/(8t)}\,.
\end{equation}
To prove the assertion under~\eqref{t1}, we have by the first inequality in~\eqref{t10}, Tonelli's theorem, and the positivity of $u_{]0,h[}$,
\begin{equation}\begin{split}\label{t11}
T(R_{A,h})&\le \int_{[0,\infty)}dt \int_A dx'\,\int_{]0,h[}dx_1\,u_{]0,h[}(x_1;t)\\
&=\int_{[0,\infty)}dt\,\sum_{k=1,3,\dots}e^{-t\pi^2k^2/h^2}\int_A dx'\,\frac{2}{h}\bigg(\int_{]0,h[}dx_1\,\sin\bigg(\frac{\pi kx_1}{h}\bigg)\bigg)^2\\
&=\sum_{k=1,3,\dots}\frac{8h^3|A|}{\pi^4k^4}
=\frac{15}{16}\sum_{k\in \N}\frac{8h^3|A|}{\pi^4k^4}
=\frac{|A|h^3}{12}\,,
\end{split}\end{equation}
where we have used that $\zeta(4)=\pi^4/90$.

To prove the assertion under~\eqref{t2} it suffices to bound the contribution of the second term in the right-hand side of~\eqref{t10} to $T(R_{A,h})$ from above.
We have by the coarea formula that for convex bounded $A$, and $f(d_Ax)\ge 0$,
\[
\int_Adx'f(d_A(x'))\le |\partial A|\int_{[0,\infty)}dq\,f(q)\,,
\]
where we have used that $|\partial A_q|\le |\partial A|$. See Proposition 2.4.3 in~\cite{bubu05}. Hence
\begin{equation}\label{t13}\begin{split}
\int_{[0,\infty)}&dt\, \int_A dx'\int_{]0,h[}dx_1\, 2^{(d+1)/2}e^{-d_A(x')^2/(8t)}u_{]0,h[}(x_1;t)\\
&\leq |\partial A|\int_{[0,\infty)}dt\,\int_{]0,h[}dx_1\,\int_{[0,\infty)}dq\,2^{(d+1)/2}e^{-q^2/(8t)}u_{]0,h[}(x_1;t)\\
&=2^{(d+1)/2}|\partial A|\int_{[0,\infty)}dt\,(2\pi t)^{1/2}\sum_{k\in \N}e^{-t\pi^2k^2/h^2}\frac{2}{h}\bigg(\int_{]0,h[}dx_1\,\sin\bigg(\frac{\pi kx_1}{h}\bigg)\bigg)^2\\
&=\sum_{k=1,3,\dots}\frac{2^{(d+6)/2}|\partial A|h^4}{\pi^4k^5}
=\frac{31\cdot2^{(d-4)/2}\zeta(5)}{\pi^4}|\partial A|h^4\,.
\end{split}\end{equation}
This, together with the calculation under~\eqref{t11}, proves~\eqref{t2}.

To prove the assertion under~\eqref{t3} we use the first part of Theorem 6.2 in~\cite{vdBD89} which reads that for $A$ open, bounded with $C^2$ and $R$-smooth boundary,
\begin{equation}\label{t14}
\bigg|\int_Adx'\,u_A(x';t)-|A|+\frac{2|\partial A|t^{1/2}}{\pi^{1/2}}\bigg|\le \frac{10^{d-1}|A|t}{R^2},\,t>0\,.
\end{equation}
Multiplying both sides of~\eqref{t14} with $\int_{]0,h[}dx_1u_{]0,h[}(x_1;t)$ gives, by~\eqref{t8},
\begin{align}\label{t15}
\bigg|\int_{R_{A,h}}dx\,u_{R_{A,h}}(x;t)-&|A|\sum_{k=1,3,\dots}\frac{8h}{\pi^2k^2}e^{-t\pi^2k^2/h^2}+\frac{2|\partial A|t^{1/2}}{\pi^{1/2}}\sum_{k=1,3,\dots}\frac{8h}{\pi^2k^2}e^{-t\pi^2k^2/h^2}\bigg|\nonumber \\ &\le \frac{10^{d-1}|A|t}{R^2}\sum_{k=1,3,\dots}\frac{8h}{\pi^2k^2}e^{-t\pi^2k^2/h^2},\,t>0\,,
\end{align}
where have used
\[
\int_{]0,h[}dx_1u_{]0,h[}(x_1;t)=\sum_{k=1,3,\dots}\frac{8h}{\pi^2k^2}e^{-t\pi^2k^2}\,.
\]
We complete the proof by integrating~\eqref{t15} with respect to $t$ over $[0,\infty)$. This gives, using  $\zeta(6)=\pi^6/945$,
\begin{gather*}
\int_{[0,\infty)}dt\,\sum_{k=1,3,\dots}\frac{8h}{\pi^2k^2}e^{-t\pi^2k^2/h^2}=\frac{h^3}{12}\,,\\
\int_{[0,\infty)}dt\,\sum_{k=1,3,\dots}\frac{8h}{\pi^2k^2}te^{-t\pi^2k^2/h^2}=\frac{h^5}{120}\,,\\
\int_{[0,\infty)}dt\,\sum_{k=1,3,\dots}\frac{8h}{\pi^2k^2}t^{1/2}e^{-t\pi^2k^2/h^2}=\frac{31\zeta(5)h^4}{4\pi^5}\,.
\end{gather*}
The first and the third formula above were also used in~\eqref{t11} and~\eqref{t13} respectively.
\end{proof}

Proposition 3.2 of~\cite{bbv15} asserts that for a rectangle $R_{L,h}$ of with sides of length $L$ and $H$ respectively
\[
\left|T(R_{L,h})-\frac{h^3L}{12}+\frac{31\zeta(5)h^4}{2\pi^5}\right| \le\frac{h^5}{15L}\,.
\]
This jibes with~\eqref{t3} since $|\partial ]0,L[|=2$.

The eigenvalues of a cylinder $R_{A,h}$ are easily computed by separation of variables. For example,
\[
\lambda(R_{A,h})=\frac{\pi^2}{h^2}+\lambda(A)\,.
\]
In particular, for $d$-rectangles
\[\O=\prod_{k=1}^d]0,L_k[\]
we have
\be\label{lrect}
\lambda\Big(\prod_{k=1}^d]0,L_k[\Big)=\pi^2\sum_{k=1}^d\frac{1}{L_k^2}\,.
\ee

\section{General domains\label{sgene}}

A first case to consider is when $0<q\le2/(d+2)$. The Kohler-Jobin result (see~\cite{kj78a,kj78b,bra14} for a survey and some generalizations) states that
\be\label{kjineq}
\lambda(B)T^{2/(d+2)}(B)\le\lambda(\O)T^{2/(d+2)}(\O)\qquad\hbox{for every }\O\subset\R^d,
\ee
and is crucial to provide a lower bound to $F_q$.

\begin{prop}
If $0<q\le2/(d+2)$, then
\[\min\Big\{F_q(\O)\ :\ \O\textup{ open in }\R^d,\ |\O|<\infty\Big\}=F_q(B)\]
where $B$ is any ball in $\R^d$.
\end{prop}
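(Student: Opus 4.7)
The plan is to combine the two extremal inequalities already at hand, Kohler–Jobin and Saint-Venant, after splitting the exponent $q$ into the Kohler–Jobin exponent $p = 2/(d+2)$ plus a remainder which is non-positive by the assumption $0 < q \le p$.

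First I would use the scale invariance of $F_q$, which follows directly from \eqref{scaling1} and \eqref{scaling2}, to reduce to the case $|\Omega| = |B|$. Equivalently, it is enough to prove
\[
\lambda(\Omega)\,T(\Omega)^{q} \;\ge\; \lambda(B)\,T(B)^{q}
\]
whenever $B$ is a ball with $|B| = |\Omega|$, because multiplying both sides by the same power of $|\Omega|$ gives the statement for $F_q$.

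Next, write $p = 2/(d+2)$ and decompose
\[
\lambda(\Omega)\,T(\Omega)^{q} \;=\; \bigl[\lambda(\Omega)\,T(\Omega)^{p}\bigr]\cdot T(\Omega)^{q-p}.
\]
The Kohler–Jobin inequality \eqref{kjineq} (which is itself scale invariant) controls the bracketed factor:
\[
\lambda(\Omega)\,T(\Omega)^{p} \;\ge\; \lambda(B)\,T(B)^{p}.
\]
For the remaining factor, the assumption $q \le p$ gives $q - p \le 0$, so the Saint-Venant inequality \eqref{SVI}, namely $T(\Omega) \le T(\Omega^{*}) = T(B)$, reverses into
\[
T(\Omega)^{q-p} \;\ge\; T(B)^{q-p}.
\]
Multiplying these two inequalities yields $\lambda(\Omega)\,T(\Omega)^{q} \ge \lambda(B)\,T(B)^{q}$, and hence $F_q(\Omega) \ge F_q(B)$, with equality when $\Omega$ is a ball (equality cases are already attained there in both Kohler–Jobin and Saint-Venant).

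I do not anticipate a serious obstacle: the only delicate point is bookkeeping, namely ensuring that combining two scale-invariant inequalities matches the power of $|\Omega|$ appearing in $F_q$. The sharp constraint is precisely $q \le 2/(d+2)$, which is exactly what is needed to turn the $\le$ in Saint-Venant into the correct $\ge$ after raising to the power $q-p$. Beyond this range the two inequalities push in opposite directions and the argument fails, which is consistent with the fact that a separate treatment is needed for $q > 2/(d+2)$.
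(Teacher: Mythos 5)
Your proof is correct and is essentially the same as the paper's: both split the exponent as $q = \tfrac{2}{d+2} + \bigl(q - \tfrac{2}{d+2}\bigr)$, apply Kohler--Jobin to the first factor and Saint-Venant (with the sign reversal coming from $q - \tfrac{2}{d+2} \le 0$) to the second. The only cosmetic difference is that you normalise $|\Omega|$ by scale invariance before comparing, whereas the paper keeps the powers of $|\Omega|$ in place and writes $F_q(\Omega) = \lambda(\Omega)T^{2/(d+2)}(\Omega)\bigl[T(\Omega)|\Omega|^{-(d+2)/d}\bigr]^{q-2/(d+2)}$ directly.
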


\begin{proof}
It is enough to write
\[
F_q(\O)=\lambda(\O)T^{2/(d+2)}(\O)\frac{T^{q-2/(d+2)}(\O)}{|\O|^{(dq+2q-2)/d}}
=\lambda(\O)T^{2/(d+2)}(\O)\Big[\frac{T(\O)}{|\O|^{(d+2)/d}}\Big]^{q-2/(d+2)}\,,
\]
and to apply the Kohler-Jobin inequality~\eqref{kjineq} together with the fact that $q\le2/(d+2)$, and that the quantity $T(\O)|\O|^{-(d+2)/d}$ is maximal when $\O$ is a ball.
\end{proof}

If $q>2/(d+2)$ then the infimum of $F_q$ is zero, as shown below.

\begin{prop}\label{finf}
If $q>2/(d+2)$, then
\[
\inf\Big\{F_q(\O)\ :\ \O\textup{ open in }\R^d,\ |\O|<\infty\Big\}=0\,.
\]
\end{prop}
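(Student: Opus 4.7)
The plan is to construct, for each fixed $q>2/(d+2)$, an explicit sequence $\Omega_\varepsilon$ of admissible open sets with $F_q(\Omega_\varepsilon)\to 0$ as $\varepsilon\to 0^+$. The underlying idea is to separate the three ingredients of $F_q$: one component of $\Omega_\varepsilon$ will carry both the first eigenvalue and the dominant part of the torsional rigidity, while a second, disjoint component will inflate $|\Omega_\varepsilon|$ to order one without noticeably contributing to $T$ and without lowering $\lambda$. Concretely, I would take $\Omega_\varepsilon=B_\varepsilon\sqcup U_\varepsilon$, where $B_\varepsilon$ is a ball of radius $\varepsilon$ and $U_\varepsilon$ is the disjoint union of $N_\varepsilon=1/(\omega_d\rho_\varepsilon^d)$ balls of radius $\rho_\varepsilon=\varepsilon^{d+2}$, placed pairwise far apart in $\R^d$ and disjoint from $B_\varepsilon$. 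This choice forces $|U_\varepsilon|=1$ automatically.

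Next I would compute the three quantities using the disjoint-union identities $\lambda(A\sqcup B)=\min(\lambda(A),\lambda(B))$, $T(A\sqcup B)=T(A)+T(B)$, and $|A\sqcup B|=|A|+|B|$ recalled in Section~\ref{sintro}, together with the closed-form values $\lambda(B_R)=j_{d/2-1}^2/R^2$ and~\eqref{star}. A direct calculation yields
\[
\lambda(\Omega_\varepsilon)=\frac{j_{d/2-1}^2}{\varepsilon^2},\qquad
T(\Omega_\varepsilon)=\frac{\omega_d\varepsilon^{d+2}+\rho_\varepsilon^2}{d(d+2)}\,\sim\,\frac{\omega_d\,\varepsilon^{d+2}}{d(d+2)},\qquad
|\Omega_\varepsilon|=1+\omega_d\varepsilon^d\to 1,
\]
where the equality for $\lambda$ uses $j_{d/2-1}^2/\varepsilon^2<j_{d/2-1}^2/\rho_\varepsilon^2$ (since $\rho_\varepsilon<\varepsilon$), and the asymptotics for $T$ uses $\rho_\varepsilon^2=\varepsilon^{2(d+2)}\ll\varepsilon^{d+2}$. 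Substituting these asymptotics into the definition of $F_q$ gives
\[
F_q(\Omega_\varepsilon)\sim c_{d,q}\,\varepsilon^{(d+2)q-2}\qquad\hbox{as }\varepsilon\to 0^+,
\]
for a constant $c_{d,q}>0$. The assumption $q>2/(d+2)$ makes the exponent strictly positive, so $F_q(\Omega_\varepsilon)\to 0$; combined with the trivial bound $F_q\ge 0$ this yields the proposition.

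I do not anticipate any genuine obstacle: the whole argument is power-counting based on the ingredients just listed. The only point requiring a moment's thought is to make sure the filler contribution to the torsion is genuinely subdominant, i.e.\ $\rho_\varepsilon^2\ll\varepsilon^{d+2}$; any exponent in $\rho_\varepsilon=\varepsilon^\alpha$ strictly larger than $(d+2)/2$ suffices, and the choice $\alpha=d+2$ is more than comfortable. It is also worth noting, as a conceptual check, that the same construction would \emph{fail} at the threshold $q=2/(d+2)$, matching the fact that there $F_q$ has a strictly positive infimum (attained on balls) by the Kohler-Jobin inequality~\eqref{kjineq}.
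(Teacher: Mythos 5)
Your construction is correct and is essentially the same as the paper's: both use a disjoint union of one ``anchor'' ball and a large cloud of much smaller balls, tuned so that the cloud dominates the measure while contributing negligibly to the torsional rigidity relative to the anchor, and both reduce the claim to the observation that the exponent $(d+2)q-2$ is positive precisely when $q>2/(d+2)$. (The paper fixes the anchor to be $B_1$ and lets $|\O|$ blow up, whereas you shrink the anchor and normalise $|\O|\approx 1$; by scale invariance of $F_q$ these are the same construction viewed at different scales.) The only detail to tidy up is that $N_\varepsilon=1/(\omega_d\rho_\varepsilon^d)$ need not be an integer; replacing it with $\lceil 1/(\omega_d\rho_\varepsilon^d)\rceil$ changes $|U_\varepsilon|$ and the filler's torsion by a factor $1+O(\rho_\varepsilon^d)$ and leaves the power counting unaffected.
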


\begin{proof}
Let $\O$ be the disjoint union of $B_1$ and $N$ disjoint balls of radius $\eps\in (0,1]$. Then we have
\[
F_q(\O)=\frac{\lambda(B)\big(T(B)+N\eps^{d+2}T(B)\big)^q}{\big(|B|+N\eps^d|B|\big)^{(dq+2q-2)/d}}
=F_q(B)\,\frac{(1+N\eps^{d+2})^q}{(1+N\eps^d)^{(dq+2q-2)/d}}\,.
\]
Taking now $N\in \N$ such that $\eps^{-d-2}\le N<\eps^{-d-2}+1$ gives
\[
F_q(\O)\le F_q(B)\frac{3^q}{(1+\eps^{-2})^{(dq+2q-2)/d}}\,,
\]
which vanishes as $\eps\to0$ since the exponent in the denominator is positive.
\end{proof}

We now deal with the supremum of $F_q$ for $0<q<1$.

\begin{prop}
Let $0<q<1$. Then
\[\sup\Big\{F_q(\O)\ :\ \O\textup{ open in }\R^d,\ |\O|<\infty\Big\}=+\infty.\]
\end{prop}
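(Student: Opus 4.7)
The strategy I would use mirrors the disjoint-union construction that already appeared in Proposition \ref{finf}, but now exploited in the opposite direction. The point is that $F_q$ is a scaling invariant, so the only freedom in a disjoint union of $N$ equal balls is the count $N$, and the sign of the resulting exponent in $N$ depends on where $q$ lies relative to certain thresholds. For the infimum in Proposition \ref{finf} the threshold was $2/(d+2)$; here, I expect the relevant threshold to be $q=1$.

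Concretely, I would let $\Omega_N$ be the disjoint union of $N$ translated copies of the unit ball $B_1$ in $\R^d$ (translations chosen only to guarantee disjointness, which affects none of the three quantities below). Then I would use three standard facts: $|\Omega_N|=N|B_1|$ by additivity of Lebesgue measure on a disjoint union; $T(\Omega_N)=NT(B_1)$ by the additivity of torsional rigidity on disjoint unions recalled in the introduction; and $\lambda(\Omega_N)=\lambda(B_1)$ by the fact that the first Dirichlet eigenvalue of a disjoint union is the infimum of the eigenvalues of the components, also recalled in the introduction.

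Substituting these three expressions into
\[
F_q(\Omega_N)=\frac{\lambda(\Omega_N)T^q(\Omega_N)}{|\Omega_N|^{(dq+2q-2)/d}}
\]
gives $F_q(\Omega_N)=F_q(B_1)\,N^{\alpha(q)}$ with exponent
\[
\alpha(q)=q-\frac{dq+2q-2}{d}=\frac{2(1-q)}{d}.
\]
Since $0<q<1$ one has $\alpha(q)>0$, so $F_q(\Omega_N)\to+\infty$ as $N\to\infty$, and the supremum is $+\infty$.

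There is essentially no obstacle: the computation is a one-line algebraic check once the three additivity/monotonicity properties are invoked. The only thing worth emphasising is why the argument breaks down at $q=1$ (the exponent $\alpha(q)$ vanishes there, so disjoint unions of balls just reproduce $F_q(B_1)$), which is consistent with the known fact that the natural upper bound $F_1(\Omega)\le F_1(B)$ holds by the Saint-Venant inequality combined with a Faber--Krahn-type argument; this suggests that disjoint unions are exactly the right family to test, and no finer construction is needed for $q<1$.
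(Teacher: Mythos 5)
Your proof is correct, but it takes a genuinely different route from the paper's. The paper sends $\eps\to 0$ through a thin $d$-rectangle $]0,\eps[\times]0,1[^{d-1}$, invoking the torsion estimate of Theorem~\ref{2.2} and the explicit eigenvalue formula~\eqref{lrect}; the resulting exponent of $\eps$ is $-2(1-q)(d-1)/d$, which is negative precisely when $q<1$ (and $d\ge2$). You instead take a disjoint union $\Omega_N$ of $N$ translated copies of $B_1$ and send $N\to\infty$, using only the additivity of $T$ and $|\cdot|$ on disjoint unions and the fact that $\lambda$ is the minimum over components — all already recalled in the introduction. Your exponent $\alpha(q)=2(1-q)/d$ is positive for $0<q<1$ in every dimension $d\ge1$, which is a small advantage: the paper's rectangle construction degenerates at $d=1$ (the exponent $2(1-q)(d-1)/d$ vanishes), whereas yours does not, matching the explicit $d=1$ analysis in Section~\ref{sdone} where $G_q$ evaluated on $N$ equal intervals gives $N^{2-2q}\to\infty$. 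Both constructions are valid; yours is arguably the more elementary one since it bypasses the heat-kernel torsion bounds entirely, while the paper's thin-rectangle family is natural to single out because it reappears in the convex and thin-domain sections where disjoint unions are not admissible.
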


\begin{proof}
Let $\O$ be a $d$-rectangle of sides $L_k$ ($k=1,\dots,d$) and take $L_1=\eps$, and $L_k=1$ for $k\ge2$. Then, by Theorem~\ref{2.2} and~\eqref{lrect} we have
\[
F_q(\O)\approx\frac{\pi^2\big(\eps^{-2}+(d-1)\big)(\eps^3/12)^q}{\eps^{(dq+2q-2)/d}}=\frac{\pi^2\big(1+\eps^2(d-1)\big)}{12^q\,\eps^{2(1-q)(d-1)/d}}\,,
\]
which diverges to $+\infty$ as $\eps\to0$ since $q<1$.
\end{proof}

We consider now the case $q>1$. By Proposition~\ref{finf} we have
\[
\inf\Big\{F_q(\O)\ :\ \O\,\textup{open in}\subset\R^d,|\O|<\infty\Big\}=0\,.
\]
Below we show that the supremum is finite.

\begin{prop}
If $q>1$, then
\[
\sup\Big\{F_q(\O)\ :\ \O\textup{ open in }\R^d,\ |\O|<\infty\Big\}\le\Big(\frac{1}{d(d+2)\omega_d^{2/d}}\Big)^{q-1}\,.
\]
\end{prop}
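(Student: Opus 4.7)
The plan is to factor $F_q$ so that the exponent $q-1$ in the bound appears naturally, then bound the two resulting factors with classical inequalities.

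First I would establish the scale-invariant Pólya-type bound
\[
\lambda(\O)\,T(\O) \le |\O|.
\]
To see this I test the Rayleigh quotient with $u = w_\O$: using $-\Delta w_\O = 1$ we get $\int_\O |\nabla w_\O|^2\,dx = \int_\O w_\O\,dx = T(\O)$, so
\[
\lambda(\O) \le \frac{T(\O)}{\int_\O w_\O^2\,dx}.
\]
By Cauchy--Schwarz, $\int_\O w_\O^2\,dx \ge T(\O)^2/|\O|$, and the bound $\lambda(\O) T(\O) \le |\O|$ follows.

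Next I would split off one power of $\lambda T$ from $F_q$ and exploit the remaining scale-invariant ratio. Writing
\[
F_q(\O) = \bigl(\lambda(\O) T(\O)\bigr)\cdot \frac{T^{q-1}(\O)}{|\O|^{(dq+2q-2)/d}}
\]
and using $\lambda(\O) T(\O) \le |\O|$, a short arithmetic check gives
\[
F_q(\O) \le \frac{T^{q-1}(\O)}{|\O|^{(q-1)(d+2)/d}} = \left(\frac{T(\O)}{|\O|^{(d+2)/d}}\right)^{q-1}.
\]
Since $q > 1$, we may now apply the scale-invariant Saint-Venant inequality \eqref{SVI2} (with $B$ the unit ball), and substitute $T(B_1) = \omega_d/(d(d+2))$ from \eqref{star} and $|B_1| = \omega_d$ to obtain
\[
\frac{T(\O)}{|\O|^{(d+2)/d}} \le \frac{1}{d(d+2)\,\omega_d^{2/d}},
\]
which yields the claimed estimate after raising to the power $q-1$.

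The proof is essentially immediate once one notices the right factorisation; there is no real obstacle. The only thing that needs a bit of care is the exponent bookkeeping: verifying that $(dq+2q-2)/d - 1 = (q-1)(d+2)/d$, which is why the factorisation with $\lambda T$ on one side and the Saint-Venant ratio on the other gives precisely the right power. The assumption $q>1$ is used to preserve the direction of inequality in the Saint-Venant step (for $q<1$ the same reasoning would reverse the bound and give the lower estimate handled in the Kohler-Jobin regime).
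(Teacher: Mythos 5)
Your proof is correct and follows essentially the same route as the paper: the identical factorisation $F_q(\O)=\frac{\lambda(\O)T(\O)}{|\O|}\bigl(\frac{T(\O)}{|\O|^{(d+2)/d}}\bigr)^{q-1}$, followed by the P\'olya bound $\lambda T\le|\O|$ and the Saint-Venant inequality with $T(B_1)=\omega_d/(d(d+2))$. The only difference is that you give the short Rayleigh quotient plus Cauchy--Schwarz derivation of $\lambda(\O)T(\O)\le|\O|$ in-line, whereas the paper simply cites it; that is a welcome but inessential addition.
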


\begin{proof}
It is enough to apply the inequality (see Proposition 2.3 of~\cite{bbv15} or Theorem 1.1 of~\cite{bfnt16})
\[
\frac{\lambda(\O)T(\O)}{|\O|}\leq 1,
\]
to get
\[
F_q(\O)=\frac{\lambda(\O)T(\O)}{|\O|}\Big(\frac{T(\O)}{|\O|^{(d+2)/d}}\Big)^{q-1}\le\Big(\frac{T(\O)}{|\O|^{(d+2)/d}}\Big)^{q-1}\,.
\]
The conclusion now follows by~\eqref{SVI2} and~(\ref{star}).
\end{proof}

We do not know the exact value of the supremum of $F_q$ in the proposition above, and whether this supremum is attained.

Finally, the case $q=1$ was considered in~\cite{bfnt16}. There it was shown that
\begin{equation}\label{donethere}
\sup\Big\{F_1(\O)\ :\ \O\textup{ open in }\R^d,\ |\O|<\infty\Big\}=1.
\end{equation}
In the Appendix we provide an independent and shorter proof.

We may collect the estimates about general domains in Table~\ref{tablegeneral}.
\begin{table}[ht]
\centering
\begin{tabular}{c|l|c}
          		& General domains $\O$	&\\
\hline
				&						&\\
$0<q\le2/(d+2)$	&$\min F_q(\O)=F_q(B)$	&$\sup F_q(\O)=+\infty$\\
				&						&\\
\hline
				&						&\\
$2/(d+2)<q<1$	&$\inf F_q(\O)=0$		&$\sup F_q(\O)=+\infty$\\
				&						&\\
\hline
				&						&\\
$q=1$			&$\inf F_q(\O)=0$		&$\sup F_q(\O)=1$\\
				&						&\\
\hline
				&						&\\
$q>1$			&$\inf F_q(\O)=0$		&$\sup F_q(\O)<+\infty$\\
				&						&\\
\hline
\end{tabular}
\caption{Bounds for $F_q(\O)$ when $\O$ varies among all domains.}\label{tablegeneral}
\end{table}

%%%%%%%%%%%%%%%%%%%%%%%%%%%%%%%%%%%%%%%%%%%%%%%%%%
\section{Convex domains\label{sconv}}

In the case of convex domains, some of the bounds seen in Section~\ref{sgene} remain: taking as $\O$ a slab $A\times]-\eps/2,\eps/2[$ we obtain
\begin{align*}
\inf\Big\{F_q(\O)\ :\ \O\,\hbox{bounded, convex, and open in $\R^d$}\Big\}=0\,, &&\hbox{if }q>1\,,
\end{align*}
and
\begin{align*}
\sup\Big\{F_q(\O)\ :\ \O\,\hbox{bounded, convex, and open in $\R^d$}\Big\}=+\infty\,, && \hbox{if }0<q<1\,.
\end{align*}

The case $q=1$ was studied in~\cite{bfnt16}, where the following bounds have been obtained:
\be\label{bounds}\begin{aligned}
\inf\Big\{F_1(\O)\ :\ \O\ \hbox{bounded, convex, and open in $\R^d$}\Big\}&=C_d^->0\,,\\
\sup\Big\{F_1(\O)\ :\ \O\ \hbox{bounded, convex, and open in $\R^d$}\Big\}&=C_d^+<1\,.
\end{aligned}\ee

The other cases follow easily from the bounds above.

\begin{prop}\label{sharpv}
We have
\[
\inf\Big\{F_q(\O)\ :\ \O\,\hbox{bounded, convex, and open in $\R^d$}\Big\}\ge C_d^-\big(d(d+2)\omega_d^{2/d}\big)^{1-q}
\]
if $q<1$, while
\[
\sup\Big\{F_q(\O)\ :\ \O\,\hbox{bounded, convex, and open in $\R^d$}\Big\}\le C_d^+\big(d(d+2)\omega_d^{2/d}\big)^{1-q}
\]
if $q>1$.
\end{prop}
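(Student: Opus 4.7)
The plan is to reduce everything to the already-known bounds \eqref{bounds} for the $q=1$ case by a simple algebraic factorisation of $F_q$ and an application of Saint-Venant \eqref{SVI2}.

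The key identity I would record first is
\[
F_q(\O)=F_1(\O)\cdot\left(\frac{T(\O)}{|\O|^{(d+2)/d}}\right)^{q-1},
\]
which is a direct consequence of matching the exponents of $T(\O)$ and $|\O|$ on both sides: the exponent of $T$ is $1+(q-1)=q$, and the exponent of $|\O|$ is $-1-(d+2)(q-1)/d=-(dq+2q-2)/d$, as required by the definition of $F_q$.

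Next I would invoke the Saint-Venant inequality \eqref{SVI2} combined with the explicit formula \eqref{star} for the torsional rigidity of a ball, giving the scale-invariant bound
\[
\frac{T(\O)}{|\O|^{(d+2)/d}}\le\frac{T(B_1)}{|B_1|^{(d+2)/d}}=\frac{1}{d(d+2)\omega_d^{2/d}}.
\]
The sign of $q-1$ then flips this inequality appropriately when we raise to the power $q-1$. For $q<1$ the exponent is negative, so
\[
\left(\frac{T(\O)}{|\O|^{(d+2)/d}}\right)^{q-1}\ge\bigl(d(d+2)\omega_d^{2/d}\bigr)^{1-q},
\]
and combining with the convex lower bound $F_1(\O)\ge C_d^-$ from \eqref{bounds} yields the stated lower estimate on $F_q(\O)$. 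For $q>1$ the exponent is positive, the inequality on the $T/|\O|^{(d+2)/d}$ factor survives raising to the power $q-1$, and combining with the convex upper bound $F_1(\O)\le C_d^+$ from \eqref{bounds} yields the stated upper estimate.

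There is no real obstacle: once the factorisation is written down, the proof is just bookkeeping on signs of exponents. The only thing to double-check is that the Saint-Venant comparison is used on the side (upper bound on $T/|\O|^{(d+2)/d}$) that remains the correct side after raising to $q-1$ in each of the two regimes, which is precisely what forces the two cases $q<1$ and $q>1$ to produce a lower and an upper bound respectively. Note that convexity of $\O$ is not used for the Saint-Venant step; it enters only through the bounds \eqref{bounds} on $F_1$.
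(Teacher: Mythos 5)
Your proposal is correct and follows exactly the same approach as the paper: the identity $F_q(\O)=F_1(\O)\bigl(T(\O)/|\O|^{(d+2)/d}\bigr)^{q-1}$, the Saint-Venant bound on the second factor, a sign-of-$(q-1)$ case split, and the bounds \eqref{bounds} on $F_1$. Nothing to add.
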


\begin{proof}
Since
\[
F_q(\O)=F_1(\O)\Big(\frac{T(\O)}{|\O|^{1+2/d}}\Big)^{q-1}\,,
\]
it is enough to apply the bounds~\eqref{bounds} to get for $\O$ bounded, convex and open in $\R^d$
\begin{align*}
F_q(\O)&\geq C_d^-\Big(\frac{T(\O)}{|\O|^{1+2/d}}\Big)^{q-1}\ge C_d^-\Big(\frac{T(B)}{|B|^{1+2/d}}\Big)^{q-1} &\hbox{if }q<1\,,\\
F_q(\O)&\leq C_d^+\Big(\frac{T(\O)}{|\O|^{1+2/d}}\Big)^{q-1}\le C_d^+\Big(\frac{T(B)}{|B|^{1+2/d}}\Big)^{q-1} &\hbox{if }q>1\,,
\end{align*}
where $B$ is any ball. Since
\[
\frac{T(B)}{|B|^{1+2/d}}=\frac{1}{d(d+2)\omega_d^{2/d}}\,,
\]
the proposition follows.
\end{proof}

The explicit values of $C_d^-$ and $C_d^+$ for the case $q=1$ are not yet known. Looking at the results for thin domains in Section~\ref{sthin} and Corollary 1.6 in~\cite{bfnt19} we make the following conjecture.

\begin{conj}\label{conje}
The optimal values $C_d^+$ and $C_d^-$ in Proposition~\ref{sharpv} are given by
\begin{align*}
C_d^+=\frac{\pi^2}{12}\,, &&
C_d^-=\frac{\pi^2}{12}\frac{6}{(d+1)(d+2)}\,.
\end{align*}
The constant $C_d^+$ is asymptotically reached by a thin ``slab'', $\O_\eps=A\times[0,\eps]$, where $A$ is any open, bounded, convex $(d-1)$- dimensional set, and $\eps\to0$.
The constant $C_d^-$ is asymptotically reached by a thin ``cone set'' in the sense of Definition~\ref{defconefct}.
\end{conj}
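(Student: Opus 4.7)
The conjecture has two components: the explicit constants $C_d^+=\pi^2/12$ and $C_d^-=\pi^2/(2(d+1)(d+2))$, and the identification of slabs and cone-sets as asymptotic extremisers. I would attack these in tandem, verifying that the proposed families attain the conjectured values and then matching these with universal bounds over all bounded convex sets; the latter half constitutes the real difficulty.

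For the slab side, fix an open bounded convex $A\subset\R^{d-1}$ with $C^2$ boundary and set $\O_\eps=A\times\,]{-\eps/2},\eps/2[\,$. Then $|\O_\eps|=|A|\eps$, separation of variables gives $\lambda(\O_\eps)=\pi^2/\eps^2+\lambda(A)$, and Theorem~\ref{2.2} yields $T(\O_\eps)=|A|\eps^3/12+O(|\partial A|\eps^4)$ uniformly in $\eps$. Hence
\[F_1(\O_\eps)=\frac{\lambda(\O_\eps)T(\O_\eps)}{|\O_\eps|}=\Big(\frac{\pi^2}{\eps^2}+\lambda(A)\Big)\Big(\frac{\eps^2}{12}+O(\eps^3)\Big)=\frac{\pi^2}{12}+O(\eps),\]
so $\sup F_1(\O)\ge\pi^2/12$ over bounded convex $\O$. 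For the cone-set side I would apply the standard thin-domain dimensional reduction: if the cone is parametrised so that its cross-section at height $t$ scales like $t/h$, then in the limit both the Rayleigh quotient and the torsional variational problem reduce to weighted one-dimensional problems on $\,]0,h[\,$ with measure $t^{d-1}\,dt$. Computing the principal eigenvalue and the torsion integral of this weighted problem gives the combinatorial factor $1/((d+1)(d+2))$ in front of $\pi^2/2$, consistent with the sanity check $d=1$, where every convex body is an interval and $F_1\equiv\pi^2/12$.

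The substantive step is the converse: showing $F_1(\O)\le\pi^2/12$ and $F_1(\O)\ge\pi^2\cdot 6/(12(d+1)(d+2))$ for every bounded open convex $\O\subset\R^d$. The bounds obtained in~\cite{bfnt16} are not numerically sharp, so new input is required. A plausible approach for the upper bound is slicing: choose a direction of minimal width, write $\O$ as a stack of convex cross-sections, and test the product $\lambda T$ against separable trial functions built from the first Dirichlet eigenfunction on an interval of that width; the log-concavity of the torsion function on convex bodies (Brascamp--Lieb, Korevaar) should then control the cross-directional contribution and rule out concentration effects. The lower bound would follow a parallel program in a direction of maximal diameter, ideally reducing by symmetrisation of the cross-sections to a one-parameter family of axially symmetric sets on which explicit calculation is possible. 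I expect this third step to be the main obstacle, and indeed the reason the statement is presented as a conjecture rather than a theorem: the sharp universal constants for $\lambda T/|\O|$ on convex sets sit just beyond the qualitative bounds currently available in~\cite{bfnt16,bfnt19}.
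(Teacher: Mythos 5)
This is a conjecture, not a theorem, and the paper offers no proof — only supporting evidence. What you have written is a plausible outline of that evidence plus a speculative strategy for upgrading the conjecture to a theorem, and you correctly flag that the universal bounds over \emph{all} bounded convex sets are the missing (and genuinely hard) ingredient. So the overall calibration is right.

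On the slab side your computation agrees with the paper's. On the cone-set side you arrive at the correct constant $\pi^2/(2(d+1)(d+2))$, but the mechanism you describe is not quite what happens. In the Borisov--Freitas thin-domain limit used in Section~\ref{sthin}, the Rayleigh quotient does \emph{not} reduce to a weighted one-dimensional eigenvalue problem with weight $t^{d-1}$: the thin direction dominates and one gets $\lambda(\O_\eps)\approx \pi^2/(\eps^2\|h\|_{L^\infty(A)}^2)$, so only the maximum of the profile $h$ enters, not its shape. The combinatorial factor $6/((d+1)(d+2))$ comes entirely from the torsion, via $T(\O_\eps)\approx (\eps^3/12)\int_A h^3$ and the coarea computation $\int_A h^p = p|A|\int_0^1 t^{p-1}(1-t)^{d-1}\,dt$ for the cone function; the weight $(1-t)^{d-1}$ you anticipate appears there, not in the eigenvalue. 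More importantly, you only verify that slabs and cone sets \emph{attain} the conjectured values, whereas the paper's actual evidence is Proposition~\ref{pconvthin}: for every concave $h\colon A\to[0,1]$ with $\|h\|_\infty=1$ one has $\tfrac{6}{(d+1)(d+2)}\le \int_A h^3/\int_A h\le 1$, with equality precisely for cone functions and constants. This pins down slabs and cone sets as the exact extremisers among \emph{all} thin convex domains, which is a much stronger statement than a pair of sample computations and is the real reason the conjecture is singled out with these particular shapes; the additional support from isosceles triangles in dimension two (\cite{bfnt19}) is also cited. Your proposed slicing/log-concavity program for the full bounds is reasonable as a research direction but does not appear in the paper and remains entirely open.

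Finally, a small bookkeeping point: in the paper the slab is $\Omega_\eps=A\times[0,\eps]$ with $A\subset\R^{d-1}$, while you set $\Omega_\eps=A\times\,]{-\eps/2},\eps/2[$; this is of course equivalent by translation but worth matching the stated conventions.
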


The conjecture for $C_2^-$ is supported by the recent results in~\cite{bfnt19} where it is shown that if $\O$ is an isosceles triangles then $F_1(\O)\ge \frac{\pi^2}{24}$, and that this value is sharp in the limit where the quotient of height and base of the isosceles triangle becomes small.
\medskip

The question of existence of optimal convex domains for the shape functional $F_q$ arises. We will now prove the existence of a convex minimiser when $0<q<1$ and of a convex maximiser when $q>1$, while the existence for the case $q=1$ is open. Throughout we denote for a non-empty open bounded set $\O$ its {\it inradius} by
\be\label{e1}
r(\O)=\sup\big\{d_{\Omega}(x)\ :\ x\in\O\big\}\,,
\ee
and its {\it diameter} by
\[
\diam(\O)=\sup\big\{|x_1-x_2|\ :\ x_1\in\O,\ x_2\in\O\big\}\,.
\]

\begin{theo}\label{the1}
Let $q>1$. Then the shape optimisation problem
\[
\max\big\{F_q(\O)\ :\ \O\text{ open, bounded, convex in }\R^d\big\}
\]
has a maximiser $\O^+$dependent on $d$ and on $q$. Furthermore
\be\label{e4}
\frac{r(\O^+)}{\diam(\O^+)}\geq \frac{\omega_{d-1}\pi^d}{d\omega_d2^d}\,\big(d(d+2)\big)^{dq/(2(1-q))}\big(j_{(d-2)/2}\big)^{d/(q-1)}\,.
\ee
\end{theo}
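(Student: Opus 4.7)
The plan is to argue by compactness. By the scale invariance of $F_q$, I may normalise a maximising sequence $(\Omega_n)$ so that $|\Omega_n|=\omega_d$. Faber--Krahn combined with domain monotonicity applied to $B_{r(\Omega_n)}\subset\Omega_n$ yields
$j_{d/2-1}^2\le\lambda(\Omega_n)\le j_{d/2-1}^2/r(\Omega_n)^2$, so $r(\Omega_n)\le 1$; the whole task is thus to bound $\diam(\Omega_n)$ from above. Once that is done, translating each $\Omega_n$ so that an inscribed ball sits at the origin places the sequence inside a fixed ball; Blaschke's selection theorem then extracts a subsequence converging in the Hausdorff distance to a convex set $\Omega^+$ with $|\Omega^+|=\omega_d>0$ (so $\Omega^+$ is open with non-empty interior), and the standard continuity of $\lambda$, $T$ and $|\cdot|$ under Hausdorff convergence on convex bodies (see e.g.~\cite{he06}) identifies $\Omega^+$ as a maximiser.

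To bound the diameter and obtain~\eqref{e4} at the same time, I derive an upper bound for $F_q$ on a general convex body in terms of $r/D$. The three ingredients are: (i) the monotonicity bound $\lambda(\Omega)\le j_{d/2-1}^2/r(\Omega)^2$ recalled above; (ii) a pointwise bound on the torsion function $\|w_\Omega\|_\infty\le C_1\,r(\Omega)^2$ for convex $\Omega$, obtained by comparing $w_\Omega$ with the torsion function of a slab of the appropriate width containing $\Omega$ and using the maximum principle, which on integration gives $T(\Omega)\le C_1\,r(\Omega)^2\,|\Omega|$; (iii) the convex-geometric lower bound $|\Omega|\ge C_2\,r(\Omega)^{d-1}\,\diam(\Omega)$, coming from the bipyramid obtained as the convex hull of an inscribed ball of radius $r(\Omega)$ with the two endpoints realising the diameter. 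Inserting these three bounds into $F_q(\Omega)=\lambda(\Omega)T(\Omega)^q|\Omega|^{-(dq+2q-2)/d}$, the exponents of $r$ and $D$ combine (the arithmetic $q-(dq+2q-2)/d=2(1-q)/d$, together with the $r^{d-1}$ coming from (iii), collapses everything into a single ratio) into
\[
F_q(\Omega)\le K_{d,q}\,\Big(\frac{r(\Omega)}{\diam(\Omega)}\Big)^{2(q-1)/d},
\]
for an explicit $K_{d,q}$. Since the unit ball is an admissible competitor, $F_q(\Omega^+)\ge F_q(B_1)$, and using~\eqref{star} one computes $F_q(B_1)=j_{d/2-1}^2\,(d(d+2))^{-q}\,\omega_d^{2(1-q)/d}$. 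Because $q>1$ makes the exponent $2(q-1)/d$ strictly positive, the chain $F_q(B_1)\le F_q(\Omega^+)\le K_{d,q}(r(\Omega^+)/\diam(\Omega^+))^{2(q-1)/d}$ rearranges to the explicit lower bound~\eqref{e4}. Applied along the maximising sequence (for $n$ large, $F_q(\Omega_n)\ge F_q(B_1)$), the same inequality furnishes a uniform lower bound on $r(\Omega_n)/\diam(\Omega_n)$; combined with $r(\Omega_n)\le 1$ and $|\Omega_n|=\omega_d$, this finally bounds $\diam(\Omega_n)$ from above, completing the compactness step.

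The main technical obstacle is the careful bookkeeping of constants so that $K_{d,q}$ rearranges to precisely the coefficient displayed in~\eqref{e4}. The naive bipyramid argument gives $C_2$ of order $\omega_{d-1}/d$ and the crudest slab comparison gives $C_1=1/2$, producing a bound of the correct form $(r/D)^{2(q-1)/d}$ but with a constant that does not quite match the stated one; the appearance of the factor $\pi^d$ (and of $j_{(d-2)/2}^{d/(q-1)}$) on the right-hand side of~\eqref{e4} suggests that the slab used in (ii) has to be chosen so that its first eigenvalue $\pi^2/(\text{width})^2$ enters the estimate rather than the crude pointwise bound $r^2/2$, thereby introducing the $(\pi/2)^d$-type contribution that matches~\eqref{e4}. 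The remainder of the proof---Blaschke selection together with the Hausdorff continuity of the shape functionals on convex bodies---is entirely standard and is insensitive to the precise values of $C_1$ and $C_2$.
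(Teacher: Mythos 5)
Your compactness scheme is essentially the one in the paper (normalise, control $\diam$ from above via a lower bound on $r/\diam$ obtained by comparing with the ball, then Blaschke selection and Hausdorff continuity of $\lambda$, $T$, $|\cdot|$ on convex bodies), and the bicone bound $|\O|\ge C_2\,\diam(\O)\,r(\O)^{d-1}$ also matches. Where you diverge is in the analytic input used to control $\lambda T^q$: you use the two separate upper bounds $\lambda(\O)\le j_{(d-2)/2}^2/r(\O)^2$ (domain monotonicity) and $T(\O)\le C_1\,r(\O)^2|\O|$ (slab comparison for $\|w_\O\|_\infty$). The paper instead uses the P\'olya inequality $\lambda(\O)T(\O)\le |\O|$ (stated as~\eqref{donethere}) to reduce to $F_q(\O)\le\lambda(\O)^{1-q}|\O|^{-2(q-1)/d}$, and then the Payne--Stakgold lower bound $\lambda(\O)\ge \pi^2/(4r(\O)^2)$, which is where the $\pi^d/2^d$ in~\eqref{e4} comes from; the factor $j_{(d-2)/2}^{d/(q-1)}$ enters only through $F_q(B)$.

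This difference is not merely cosmetic: it is why your constant does not close. Your two bounds combine to $\lambda(\O)T(\O)\le j_{(d-2)/2}^2 C_1 |\O|$, and since the best slab comparison gives $C_1=1/2$ while $1/j_{(d-2)/2}^2<1/2$ in every dimension, this is strictly weaker than P\'olya's $\lambda T\le |\O|$. Moreover, in your chain $F_q(B)\le F_q(\O^+)\le K_{d,q}(r/\diam)^{2(q-1)/d}$ the factor $j_{(d-2)/2}^2$ appears on both sides and cancels, so no amount of sharpening $C_1,C_2$ within your scheme can reproduce the $j_{(d-2)/2}^{d/(q-1)}$ on the right of~\eqref{e4} (indeed, matching it would force $C_1$ to depend on $q$). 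Your closing speculation that refining the slab in step~(ii) will recover the $\pi^d$ is therefore off the mark: the missing idea is to replace your bound on $T$ by the P\'olya inequality $\lambda T\le|\O|$, so that the $\lambda^{1-q}$ that remains is bounded from below by Payne--Stakgold. With that substitution your argument reproduces~\eqref{e4}; as it stands, it correctly proves existence of the maximiser and a lower bound on $r(\O^+)/\diam(\O^+)$ of the right form, but with a smaller constant, so the inequality~\eqref{e4} as stated is not established.
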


\begin{theo}\label{the2}
Let $0<q<1$. Then the shape optimisation problem
\[
\min\big\{F_q(\O)\ :\ \O\text{ open, bounded, convex in }\R^d\big\}
\]
has a minimiser $\O^-$ dependent on $d$ and on $q$. Furthermore
\be\label{e6}
\frac{r(\O^-)}{\diam(\O^-)}\ge\pi2^{(5q-4)/(2(1-q))}\big(j_{(d-2)/2}\big)^{1/(q-1)}\,.
\ee
\end{theo}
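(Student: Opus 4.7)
I would follow the scheme of Theorem~\ref{the1}. By the scaling invariance of $F_q$ (cf.~\eqref{scaling1}--\eqref{scaling2}) a minimising sequence $(\Omega_n)$ of bounded open convex sets may be normalised to satisfy $\diam(\Omega_n)=1$, and after a translation $\Omega_n\subset\overline{B_1(0)}$. Blaschke's selection theorem then yields a subsequence converging in Hausdorff distance to a convex set $\Omega^-$, and the two remaining tasks are to show that $\Omega^-$ is non-degenerate (positive inradius) and to upgrade that non-degeneracy to the explicit bound~\eqref{e6}.

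\textbf{Key estimate.} The heart of the proof is a lower bound of the form
\[
F_q(\Omega)\ge K(d,q)\,\bigl(\diam(\Omega)/r(\Omega)\bigr)^{2(1-q)/d}
\]
valid for every bounded convex $\Omega\subset\R^d$. I would obtain it by combining three convex-body inequalities: Hersch's bound $\lambda(\Omega)\ge\pi^2/(2r(\Omega))^2$; the torsion estimate $T(\Omega)\ge r(\Omega)^2|\Omega|/(d+1)^2$, obtained from the test function $d(\cdot,\partial\Omega)\in H^1_0(\Omega)$ via the Brunn--Minkowski bound $|\{d(\cdot,\partial\Omega)>t\}|\ge(1-t/r(\Omega))^d|\Omega|$; and the volume lower bound $|\Omega|\ge c_d\,r(\Omega)^{d-1}\diam(\Omega)$, coming from the fact that $\Omega$ contains the convex hull of an inscribed ball of radius $r(\Omega)$ and a diameter-realising point. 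Substituted in $F_q=\lambda T^q/|\Omega|^{((d+2)q-2)/d}$ and simplifying, the $r(\Omega)$ factors recombine as $r(\Omega)^{-2(1-q)/d}$ and the $\diam(\Omega)$ factors as $\diam(\Omega)^{2(1-q)/d}$, which for $q<1$ indeed blows up as the aspect ratio diverges. Since $F_q(\Omega_n)\le F_q(B_1)<\infty$ along the minimising sequence, the estimate forces $r(\Omega_n)\ge c>0$.

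\textbf{Conclusion and derivation of~\eqref{e6}.} On the class of convex sets with a uniform lower bound on the inradius, the functionals $\lambda$, $T$ and $|\cdot|$ are continuous with respect to Hausdorff convergence (see e.g.~\cite{bubu05}); therefore $F_q(\Omega^-)=\lim F_q(\Omega_n)=\inf F_q$, so $\Omega^-$ is a minimiser. Plugging $\Omega=\Omega^-$ back into the key estimate and using $F_q(\Omega^-)\le F_q(B_1)=j_{(d-2)/2}^2\,\omega_d^{2(1-q)/d}/(d(d+2))^q$, the resulting inequality $K(d,q)\,(\diam(\Omega^-)/r(\Omega^-))^{2(1-q)/d}\le F_q(B_1)$ can be solved for $r(\Omega^-)/\diam(\Omega^-)$.

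\textbf{Main obstacle.} The delicate point is to organise the three ingredients of the key estimate so that, after the comparison with $F_q(B_1)$, the dimensional constants such as $\omega_d$, $\omega_{d-1}$ and $(d(d+1)(d+2))^q$ collapse neatly into the clean expression $\pi\,2^{(5q-4)/(2(1-q))}\,j_{(d-2)/2}^{1/(q-1)}$ of~\eqref{e6}. This may well require slightly sharper ingredients than the three enumerated above (or a careful accounting of every constant through the chain), but it is a routine verification once the correct combination of convex-body bounds is identified.
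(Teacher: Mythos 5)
Your strategy — normalise, extract a Hausdorff-convergent subsequence via Blaschke selection, prevent degeneracy through a quantitative aspect-ratio bound, and pass to the limit using continuity of $\lambda$, $T$ and $|\cdot|$ on convex bodies — is the paper's, and it gives a valid existence proof. For the torsion lower bound the paper does not use the distance-function test function; it cites Theorem~1.1(i) of~\cite{DPGGLB} (with $p=q=2$), namely $T(\O)\ge\tfrac{2}{d(d+2)}|\O|M(\O)$, together with $M(\O)\ge\lambda(\O)^{-1}$, so that the $\lambda$ in $F_q$ cancels and Hersch's inequality~\eqref{e9} is applied only to the residual power $\lambda^{1-q}$. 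Your alternative $T(\O)\ge r(\O)^2|\O|/(d+1)^2$ is correct and leads to an estimate of the same type, so this first difference is essentially cosmetic.

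The consequential difference is the geometric step converting $|\O|$ into $\diam(\O)$. You use the double-cone bound $|\O|\ge c_d\,r(\O)^{d-1}\diam(\O)$, exactly as in the proof of Theorem~\ref{the1}, whereas the paper's proof of Theorem~\ref{the2} invokes the isodiametric inequality $|\O|\le\frac{\omega_d}{2^d}\diam(\O)^d$. Since $|\O|$ enters the lower bound for $F_q$ to the positive power $2(1-q)/d$, what one actually needs is a lower bound on $|\O|$, and your cone estimate goes the right way; after substitution the aspect ratio appears as $(\diam/r)^{2(1-q)/d}$, as you wrote. But precisely because of that exponent, after comparison with $F_q(B)$ from~\eqref{e14} the exponent of $j_{(d-2)/2}$ in your final lower bound on $r/\diam$ comes out as $d/(q-1)$ — the same as in~\eqref{e4} — rather than $1/(q-1)$ as stated in~\eqref{e6}. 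The step you describe as routine bookkeeping is therefore not: your chain produces an estimate of the structural form of~\eqref{e4} (with different constants), and it cannot collapse to~\eqref{e6}, since the two differ by a factor of $d$ in the exponent of $j_{(d-2)/2}$.
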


\begin{proof}[Proof of Theorem~\ref{the1}] Let $q>1$. Since $F_q(t\O)=F_q(\O)$ for every $t>0$, we can consider a maximising sequence $\Omega_n$ for $F_q(\Omega)$ with $r(\O_n)$ fixed. If the diameter of $\O_n$ is uniformly bounded in $n$, then there exists a sequence of translates of a subsequence $(\O_{n_k})$, which converges in both the Hausdorff metric and complementary Hausdorff metric to say $\O^*$. Moreover torsional rigidity, principal Dirichlet eigenvalue and measure are continuous in both these metrics on the class of open, bounded, convex sets. See reference~\cite{hepi05}. To obtain an upper bound on the diameter, we use the fact that $\lambda(\Omega)T(\Omega)\leq |\Omega|$, as seen in~(\ref{donethere}), to obtain that
\be\label{e8}
F_q(\O)\le \frac{\lambda(\O)^{1-q}}{|\O|^{2(q-1)/d}}\,.
\ee
By~\cite{PS} we have for an open, bounded, convex set $\O$,
\be\label{e9}
\lambda(\O)\ge\frac{\pi^2}{4r(\O)^2}\,.
\ee
Let $0$ be a point at which the distance function in~\eqref{e1} has a maximum. Let $d_1$ and $d_2$ be two points of $\partial\O$ such that
\[
|d_1-d_2|=\diam(\O)\,.
\]
The $(d-1)$-dimensional plane perpendicular to $d_1-d_2$ intersects $B_{r(\O)}(0)$ in a $(d-1)$-dimensional disc with radius $r(\O)$. The union of the two cones having this disc as base and with vertices $d_1$ and $d_2$ has volume $(d-1)\omega_{d-1} \diam(\O)r(\O)^{d-1}$, and since these cones are contained in $\O$ we deduce
\[
|\O|\ge\frac{(d-1)\omega_{d-1}}{d}\diam(\O)r(\O)^{d-1}\,.
\]
This estimate, recalling~\eqref{e8} and~\eqref{e9}, gives
\[
\frac{r(\O)}{\diam(\O)}\ge\frac{\omega_{d-1}\pi^d}{d2^d}F_q(\O)^{d/(2(q-1))}\,.
\]
Since for any element $\O_n$ of a maximising sequence we have $F_q(\O_n)\ge F_q(B)$, this gives
\be\label{e13}
\frac{r(\O_n)}{\diam(\O_n)}\ge\frac{\omega_{d-1}\pi^d}{d2^d}F_q(B)^{d/(2(q-1))}\,.
\ee
Since $r(\O_n)$ is fixed, \eqref{e13} gives the required uniform upper bound for $\diam(\O_n)$. A straightforward computation shows that
\be\label{e14}
F_q(B)=\big(j_{(d-2)/2}\big)^2(d(d+2))^{-q}\omega_d^{2(1-q)/d}\,,
\ee
so~\eqref{e4} follows by~\eqref{e13} and~\eqref{e14}.
\end{proof}

\begin{proof}[Proof of Theorem~\ref{the2}]
Let $0<q<1.$ We follow the same strategy as in the proof of Theorem~\ref{the1}, and fix the inradius of the elements of a minimising sequence. To obtain a uniform upper bound on the diameter we proceed as follows. For an open, bounded, convex set in $\R^d$ we have by Theorem 1.1(i) in~\cite{DPGGLB} in the special case $p=q=2$ that
\[
\frac{T(\O)}{|\O|M(\O)}\ge \frac{2}{d(d+2)}\,,
\]
where $M(\O)$ is the maximum of the torsion function. On the other hand it is well known that $M(\O)\ge\lambda(\O)^{-1}$, see for example~\cite{vdB} and the references therein. It follows that
\[
F_q(\O)\ge
\bigg(\frac{2}{d(d+2)}\bigg)^q\frac{\lambda(\O)^{1-q}}{|\O|^{2(q-1)/d}}\,,
\]
which by~\eqref{e9} implies
\[
F_q(\O)\ge \frac{2^{3q-2}\pi^{2(1-q)}}{\big(d(d+2)\big)^q}\frac{r(\O)^{2(q-1)}}{|\O|^{2(q-1)/d}}\,.
\]
Furthermore, by the isodiametric inequality (see for instance~\cite{GE}),
\[
|\O|\le\frac{\omega_d}{2^d}\diam(\O)^d\,.
\]
The last two estimates, together with the fact that $F_q(B)\geq F_q(\Omega_n)$ for elements of a sequence minimizing $F_q$, imply
\be\label{e19}
\frac{r(\O_n)}{\diam(\O_n)}\ge\pi 2^{(5q-4)/(2(1-q))}\omega_d^{1/d}(d(d+2)^{q/(2(q-1))}F_q(B)^{1/(2(q-1))}\,.
\ee
Since $r(\O_n)$ is fixed, then $\diam(\O_n)$ is uniformly bounded from above. This completes the proof of the existence of a minimiser, and the estimate~\eqref{e6} directly comes by putting together~\eqref{e14} and~\eqref{e19}.
\end{proof}

We may then summarize the results about the case of convex domains in Table~\ref{tableconvex}.

\begin{table}[ht]
\centering
\begin{tabular}{c|l|c}
          		& Convex domains $\O$	&\\
\hline
				&						&\\
$q<1$			&$\min F_q(\O)>0$		&$\sup F_q(\O)=+\infty$\\
				&						&\\
\hline
				&						&\\
$q=1$			&$\inf F_1(\O)=C_d^->0$	&$\sup F_1(\O)=C_d^+<1$\\
				&						&\\
\hline
				&						&\\
$q>1$			&$\inf F_q(\O)=0$		&$\max F_q(\O)<+\infty$\\
				&						&\\
\hline
\end{tabular}
\medskip\caption{Bounds for $F_q(\O)$ when $\O$ varies among convex domains.}\label{tableconvex}
\end{table}

%%%%%%%%%%%%%%%%%%%%%%%%%%%%%%%%%%%%%%%%%%%%%%%%%%
\section{Thin domains\label{sthin}}

In this section we analyse the case $q=1$ when $\O_\eps$ is a {\it thin domain}. More precisely, we consider
\[\O_\eps=\big\{(s,t)\ :\ s\in A,\ \eps h_-(s)<t<\eps h_+(s)\big\}\]
where $\eps$ is a small positive parameter, $A$ is a (smooth) domain of $\R^{d-1}$, and $h_-,h_+$ are two given (smooth) functions. We denote by $h(s)$ the {\it local thickness}
\[h(s)=h_+(s)-h_-(s),\]
and we assume that $h(s)\ge0$. The asymptotics for $\lambda(\Omega_\eps)$ and $T(\Omega_\eps)$ have been obtained in~\cite{bofr10}  and~\cite{bofr13}, and their first terms are
\begin{align*}
\lambda(\O_\eps)\approx\frac{\eps^{-2}\pi^2}{\|h\|^2_{L^\infty(A)}}\,, &&
T(\O_\eps)\approx\frac{\eps^3}{12}\int_A h^3(s)\,ds\,,
\end{align*}
which together give the asymptotic formula
\[
F_1(\O_\eps)\approx\frac{\pi^2}{12}\Big[\int_A h^3(s)\,ds\Big]\Big[\|h\|^2_{L^\infty(A)}\int_A h\,ds\Big]^{-1}\,.
\]
We now consider the case of {\it convex thin domains}, where the set $A\subset\R^{d-1}$ is convex, and the function $h:A\to\R^+$ is concave. In this case, we will see that the maximal and minimal possible values for $\lim_{\eps\to 0} F_1(\O_\eps)$ are respectively reached by the constant functions and by the cone functions, in the sense below. Different types of estimates for integrals involving powers of concave functions have been obtained in~\cite{kazi75}.

\begin{defi}\label{defconefct}
Let $A\subset \R^{d-1}$ be a convex set of positive measure, and let $P$ be an internal point of $A$. We call \emph{cone function} the smallest concave function $h:A\to [0,1]$ such that $h(P)=1$ and $h\res \partial A=0$. Notice that the level sets of $h$ are all homothetic copies of $A$. More precisely, for every $0\leq \sigma\leq 1$ the level set $\{h\geq \sigma\}$ is given by the set $\sigma P + (1-\sigma) A$. The set $\Omega_\eps$ is correspondingly called a \emph{cone set}.
\end{defi}

\begin{prop}\label{pconvthin}
Let $A\subset\R^{d-1}$ be a convex set. Then for every concave function $h:A\to\R^+$ with $\|h\|_{L^\infty(A)}=1$ we have
\be\label{thingendim}
\frac 6{(d+1)(d+2)}\le\frac{\begin{aligned}\int_A h^3(x)\,dx\end{aligned}}{\begin{aligned}\int_A h(x)\,dx\end{aligned}}\le1\,.
\ee
Moreover, both inequalities are sharp. In particular, for any $(d-1)$-dimensional convex set $A$, the right inequality is an equality if and only $h\equiv1$, while the left inequality is an equality if and only if $h$ is a cone function.
\end{prop}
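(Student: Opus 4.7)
The plan is as follows. The upper bound is immediate: since $\|h\|_{L^\infty(A)}=1$ we have $h^3\le h$ pointwise on $A$, whence $\int_A h^3\le\int_A h$, with equality forcing $h\in\{0,1\}$ almost everywhere; combined with concavity and $\max h=1$, this forces $h\equiv 1$ on $A$.

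For the lower bound I would combine the layer-cake formula with Brunn--Minkowski and a stochastic ordering argument. Setting $m(\sigma):=|\{x\in A:\,h(x)\ge\sigma\}|$, the layer-cake identities $\int_A h\,dx=\int_0^1 m\,d\sigma$ and $\int_A h^3\,dx=3\int_0^1 \sigma^2 m\,d\sigma$ reduce the claim to
\[
\frac{\int_0^1 \sigma^2 m(\sigma)\,d\sigma}{\int_0^1 m(\sigma)\,d\sigma}\ge\frac{2}{(d+1)(d+2)}.
\]
The super-level sets $\{h\ge\sigma\}$ are convex since $h$ is concave, so Brunn--Minkowski applied in $\R^{d-1}$ gives that $\psi:=m^{1/(d-1)}$ is concave on $[0,1]$. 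Normalising $|A|=1$ (the ratio is unaffected), we have $\psi(0)=1$ and $\psi(1)\ge 0$.

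The key step is the monotonicity claim that $\psi(\sigma)/(1-\sigma)$ is non-decreasing on $[0,1)$. This follows in one line from the supporting-line inequality $\psi(1)\le\psi(\sigma)+\psi'(\sigma)(1-\sigma)$ valid for concave $\psi$, which rearranges to
\[
\left(\frac{\psi(\sigma)}{1-\sigma}\right)'=\frac{\psi'(\sigma)(1-\sigma)+\psi(\sigma)}{(1-\sigma)^2}\ge\frac{\psi(1)}{(1-\sigma)^2}\ge 0.
\]
Raising to the $(d-1)$-th power, the density ratio $m(\sigma)/(1-\sigma)^{d-1}$ is also non-decreasing, which is the monotone likelihood ratio property between the two probability measures on $[0,1]$ with densities proportional to $m(\sigma)$ and to $(1-\sigma)^{d-1}$. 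Hence the first stochastically dominates the second in the first-order sense, and taking expectations of the non-decreasing function $\sigma\mapsto\sigma^2$ yields
\[
\frac{\int_0^1 \sigma^2 m(\sigma)\,d\sigma}{\int_0^1 m(\sigma)\,d\sigma}\ge\frac{\int_0^1 \sigma^2(1-\sigma)^{d-1}\,d\sigma}{\int_0^1 (1-\sigma)^{d-1}\,d\sigma}=\frac{B(3,d)}{B(1,d)}=\frac{2}{(d+1)(d+2)},
\]
which is the reduced inequality.

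For the equality case, strict monotonicity of $\sigma\mapsto\sigma^2$ forces the two densities to be proportional, i.e.\ $\psi(\sigma)=1-\sigma$; the equality case of Brunn--Minkowski then forces each super-level set of $h$ to be a homothetic copy of $A$, identifying $h$ as a cone function. I do not foresee a genuine obstacle in this strategy: the proof is a combination of three standard ingredients (layer-cake, Brunn--Minkowski, monotone likelihood ratio), and the only point requiring a bit of care is that $\psi$ is a priori only differentiable almost everywhere, but concave functions possess one-sided derivatives at every interior point and are absolutely continuous on compact sub-intervals of $[0,1)$, so the derivative computation above is justified without further technicalities.
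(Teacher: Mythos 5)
Your proof is correct, and it takes a genuinely different route from the paper. The paper first reduces to a radially symmetric profile via Schwarz symmetrisation (using Brunn--Minkowski to preserve concavity), and then carries out a four-step surgery on the one-dimensional profile $h:[0,M]\to[0,1]$ (eliminating the flat part, extending until $h$ vanishes, then successively replacing $h$ by its tangent line on $[K,M]$ and on $[0,K]$) to show that among concave profiles the affine one minimises the ratio. You instead work directly with the distribution function $m(\sigma)=|\{h\ge\sigma\}|$: the layer-cake identity reduces the claim to $\int_0^1\sigma^2m\,d\sigma\,/\,\int_0^1 m\,d\sigma\ge 2/((d+1)(d+2))$, Brunn--Minkowski gives that $\psi=m^{1/(d-1)}$ is concave, the elementary observation that $\sigma\mapsto\psi(\sigma)/(1-\sigma)$ is non-decreasing (for concave $\psi$ with $\psi(1)\ge0$) yields that $m(\sigma)/(1-\sigma)^{d-1}$ is non-decreasing, and then the Chebyshev/monotone-likelihood-ratio argument closes the inequality, since $(1-\sigma)^{d-1}$ is precisely the distribution function of a cone. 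This is shorter and avoids the rearrangement step entirely; it also makes the extremal profile appear naturally as the comparison density rather than emerging from the case analysis. The equality case works as you indicate, though you should spell it out a touch more: equality in the Chebyshev step forces $m(\sigma)=(1-\sigma)^{d-1}$, equality in Brunn--Minkowski between the superlevel sets $A_{t_1}$, $A_{t_2}$ (both with positive measure, so $t_1,t_2<1$) forces them to be homothetic, and the nesting of these homothetic copies shrinking to a point $P$ identifies $h$ with the cone function of vertex $P$ on $A$. One very minor caveat: your derivative computation for the monotonicity of $\psi(\sigma)/(1-\sigma)$ is fine in spirit, but it is cleaner (and avoids any regularity discussion) to note directly that for $\sigma_1<\sigma_2<1$ one has $\sigma_2=\lambda\sigma_1+(1-\lambda)\cdot 1$ with $\lambda=(1-\sigma_2)/(1-\sigma_1)$, so concavity gives $\psi(\sigma_2)\ge\lambda\psi(\sigma_1)+(1-\lambda)\psi(1)\ge\lambda\psi(\sigma_1)$.
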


\begin{proof}
Since the proof is quite involved, we divide it in several steps.

\medskip\step{I}{Preliminary notation.}
We start by considering the simpler case of a radial function, the general case will be studied only at the last step. Hence, from now and until the last step of the proof, we assume that $A$ is a $(d-1)$-dimensional ball and we consider a radially symmetric, decreasing, concave function $h:A\to [0,1]$ with $h(0)=1$. With minor abuse of notation we will denote by $h$ also the $1$-dimensional shape of $h$, that is, we write $h(x)=h(|x|)$. In particular, there is some $M>0$ such that $h:[0,M]\to [0,1]$, with $h(0)=1$, and the left inequality in~\eqref{thingendim} (observe that the other one is trivial)
 can be rewritten as
\be\label{goal}
\int_0^M \big(h^3(s)-C_d h(s)\big) s^{d-2}\,ds\ge0\,,
\ee
where we denote for brevity by $C_d$ the constant in the left of~\eqref{thingendim}, so for instance $C_2=1/2$ and $C_3=3/10$, and $C_d<1$ for every dimension. Notice that there is no need to consider the case $M=\infty$, because this corresponds to a function $h$ which is constantly $1$, for which the result is obvious. We will also call
\[H=\max\big\{s\ :\ h(s)=1\big\}\,,\qquad K=\max\big\{s\le M\ :\ h(s)\ge\sqrt{C_d}\big\}\,.\]

\medskip\step{II}{If $K=M$, then~\eqref{goal} holds strictly.}
In this very short step, we only observe that the case $K=M$ is not interesting. In fact, if $K=M$ this means that $h(s)\ge\sqrt{C_d}$ for every $0\le s\le M$, thus the integrand in~\eqref{goal} is pointwise strictly positive, except possibly at the sole $s=M$, so the validity of~\eqref{goal} is trivial and there is no equality case. As a consequence, from now on we assume $K<M$, so in particular $h(K)=\sqrt{C_d}$. Notice that the integrand in~\eqref{goal} is positive in the interval $[0,K]$ and negative in the interval $[K,M]$.

\medskip\step{III}{There are no flat parts, i.e., $H=0$.}
This step is devoted to reduce ourselves to the case when $h$ is strictly decreasing, so $H=0$. To be precise, assume for a moment that $H>0$, and call $\tilde h:[0,M-H]\to\R^+$ the concave function given by $\tilde h(s)=h(s+H)$. We want to show that if
\be\label{assmpt}
\int_0^M \big(h^3(s)-C_d h(s)\big)s^{d-2}\,ds\le0\,,
\ee
then
\be\label{thesis}
\int_0^{M-H} \big(\tilde h^3(s)-C_d\tilde h(s)\big)s^{d-2}\,ds<0\,.
\ee
To do so, we start by observing that, being $H>0$, then~\eqref{assmpt} implies
\be\label{removed}
\int_H^M \big(h^3(s)-C_d h(s)\big)s^{d-2}\,ds<0\,.
\ee
Recalling again that $h^3(s)-C_d h(s)$ is positive in $[0,K]$ and negative in $[K,M]$, we deduce that
\[\begin{split}
\int_0^{K-H} \big(\tilde h^3(s)- C_d \tilde h(s)\big)s^{d-2}\,ds
&=\int_H^K \big(h^3(s)-C_d h(s)\big)(s-H)^{d-2}\,ds\\
&\le\bigg(1-\frac HK\bigg)^{d-2} \int_H^K \big(h^3(s)-C_d h(s)\big)s^{d-2}\,ds\,,
\end{split}\]
and analogously
\[\begin{split}
\int_{K-H}^{M-H} \big(\tilde h^3(s)- C_d \tilde h(s)\big)s^{d-2}\,ds
&=\int_K^M \big(h^3(s)- C_d h(s)\big)(s-H)^{d-2}\,ds\\
&\le\bigg(1-\frac HK\bigg)^{d-2} \int_K^M \big(h^3(s)-C_d h(s)\big)s^{d-2}\,ds\,.
\end{split}\]
The two last estimates together with~\eqref{removed} immediately imply~\eqref{thesis}, which complete this step.

As an immediate consequence of this step, to show the validity of~\eqref{goal} it is enough to consider the case $H=0$, and moreover once~\eqref{goal} will be proved it is already clear that an equality case can only happen with $H=0$. Hence, from now on we assume that $H=0$, hence $h$ is strictly decreasing.

\medskip\step{IV}{One has $h_-(M)=0$.}
This step is devoted to reduce ourselves to the case when $h_-(M)=0$. Note that, in this case, $h$ is continuous on the whole interval $[0,M]$. To be precise, we assume for a moment that $h_-(M)>0$, we call $M^+=M+(h_-(M)/h'_-(M))$, and we let $\tilde h:[0,M^+]\to[0,1]$ be the function given by
\[\tilde h(s)=\begin{cases}
h(s)&\hbox{if }0\le s<M\,, \\
h_-(M)-(s-M)h'_-(M)&\hbox{if }M\le s\le M^+\,.
\end{cases}\]

As in the preceding step, we will prove that if $h$ satisfies~\eqref{assmpt}, then
\be\label{obv}
\int_0^{M^+} \big(\tilde h^3(s)-C_d \tilde h(s)\big)s^{d-2}\,ds<0\,.
\ee
This is actually immediate. Indeed, since $h_-(M)<\sqrt{C_d}$ by Step~II, then
\[\int_0^{M^+} \big(\tilde h^3(s)-C_d \tilde h(s)\big)s^{d-2}\,ds<\int_0^M \big(h^3(s)-C_d h(s)\big)s^{d-2}\,ds\,,\]
and then the validity of~\eqref{obv} is obvious.

As a consequence of this step, we can from now on assume that $h_-(M)=0$, both for proving~\eqref{goal} and for checking the equality cases. Notice that, in particular, $h$ is a continuous, strictly decreasing, concave bijection of the interval $[0,M]$ onto the interval $[0,1]$. Therefore, $h'(s)$ is defined for almost every $s$ and a change of variable allows to rewrite the integral of~\eqref{goal} in the convenient form
\be\label{rewrite}
\int_0^M \big(h^3(s)-C_d h(s)\big)s^{d-2}\,ds=\int_0^1 \frac{(t^3-C_d t) (h^{-1}(t))^{d-2}}{|h'(h^{-1}(t))|}\,dt\,.
\ee

\medskip\step{V}{The function $h$ is affine in $[K,M]$ with $h'\equiv h_-'(K)$ on $(K,M)$.}
In this step we show that, in order to minimise the integral in~\eqref{goal}, the function $h$ must be affine in $[K,M]$. Let us be precise. We denote by $\tilde h:[0,M^+]\to[0,1]$ the largest positive concave function coinciding with $h$ on $[0,K]$, which is then an affine function on $[K,M^+]$ with $\tilde h'\equiv h_-'(K)$ there. We will show that
\be\label{goalstep5}
\int_0^M \big(h^3(s)-C_d h(s)\big)s^{d-2}\,ds\ge\int_0^{M^+} \big(\tilde h^3(s)-C_d \tilde h(s)\big)s^{d-2}\,ds\,,
\ee
with strict inequality unless $\tilde h=h$. As in the preceding steps this will imply that, in order to show~\eqref{goal} and to consider the equality cases, we can reduce ourselves to the case when $h$ is affine in $[K,M]$.

To show~\eqref{goalstep5}, we notice that for almost every $\sqrt{C_d}\le t\le1$ we have
\[\tilde h^{-1}(t)=h^{-1}(t)\qquad\hbox{and}\qquad\tilde h'(\tilde h^{-1}(t))=h'(h^{-1}(t))\,.\]
Instead, for almost every $0\le t\le\sqrt{C_d}$ we have
\[\tilde h^{-1}(t)\ge h^{-1}(t)\qquad\hbox{and}\qquad|\tilde h'(\tilde h^{-1}(t))|=|h_-'(K)|\le|h'(h^{-1}(t))|\,.\]
Therefore, \eqref{goalstep5} and its equality cases are a direct consequence of~\eqref{rewrite}.

\medskip\step{VI}{The function $h$ is affine in $[0,K]$.}
In this step we show that, to minimise the integral in~\eqref{goal}, it is convenient for the function $h$ to be affine also in the interval $[0,K]$. The argument is more delicate than in the preceding step, because there we could keep $h$ unchanged in $[0,K]$ and modify it only after $K$, while this time the modification of $h$ has necessarily effects both in $[0,K]$ and in $[K,M]$. To be precise, this time we define $\tilde h$ the affine function such that $\tilde h(0)=1$ and $\tilde h'\equiv h_-'(K)$. Notice that $\tilde h$ is defined in the interval $[0,M^-]$, being $M^-=|h_-'(K)|^{-1}$. As usual, we claim that if $h$ satisfies~\eqref{assmpt} and $\tilde h\ne h$, then
\be\label{goalstep6}
\int_0^{M^-} \big(\tilde h^3(s)-C_d \tilde h(s)\big)s^{d-2}\,ds<0\,.
\ee
To prove this inequality, we start by defining $K^-=K-(M-M^-)$. Notice that, in view of Step~V, $\tilde h(K^-)=h(K)=\sqrt{C_d}$, and for every $K\le s\le M$ we have $h(s)=\tilde h(s-(M-M^-))$. Therefore, for every $0\le t\le\sqrt{C_d}$ we have
\[\tilde h^{-1}(t) = h^{-1}(t)-(M-M^-)\,.\]
Hence, also recalling that for any such $t$ it is $t^3-C_dt\le0$, and by construction one has $h'(h^{-1}(t))=\tilde h'(\tilde h^{-1}(t))=h_-'(K)$, we get
\be\label{lowerinterval}
\int_0^{\sqrt{C_d}} \frac{(t^3-C_d t)(\tilde h^{-1}(t))^{d-2}}{|\tilde h'(\tilde h^{-1}(t))|}\,dt
\le\bigg(\frac{K^-}K\bigg)^{d-2}\int_0^{\sqrt{C_d}} \frac{(t^3-C_d t) (h^{-1}(t))^{d-2}}{|h'(h^{-1}(t))|}\,dt\,.
\ee
Notice that if $d>2$ then the strict inequality holds since $M^-<M$, which follows from the assumption that $\tilde h\ne h$. On the other hand, if $d=2$ then the equality necessarily holds.

Let us now consider the interval $[\sqrt{C_d},1]$. For any $t$ in this interval, by concavity of $h$ we have that $|h'(h^{-1}(t))|\leq |\tilde h'(\tilde h^{-1}(t)|=|h_-'(K)|$. Moreover,
\be\label{sublinear}
\tilde h^{-1}(t)<\frac{K^-}K\,h^{-1}(t)\qquad\forall\,t\in\big(\sqrt{C_d},1\big)\,.
\ee
In fact, this is an equality at $t=1$ and at $t=\sqrt{C_d}$, and then the strict inequality holds for every $t\in (\sqrt{C_d},1)$ since $h$ is concave and $\tilde h\neq h$ is affine. As a consequence, recalling that $t^3-C_dt\ge0$ in the interval, we obtain
\be\label{upperinterval}
\int_{\sqrt{C_d}}^1 \frac{(t^3-C_d t)(\tilde h^{-1}(t))^{d-2}}{|\tilde h'(\tilde h^{-1}(t))|}\,dt
<\bigg(\frac{K^-}K\bigg)^{d-2} \int_{\sqrt{C_d}}^1 \frac{(t^3-C_d t) (h^{-1}(t))^{d-2}}{|h'(h^{-1}(t))|}\,dt\,.
\ee
Notice that this time the strict inequality holds whatever $d$ is. In fact, if $d>2$ this comes from~\eqref{sublinear}, but also in the case $d=2$ the equality could be true only if $|h'(h^{-1}(t))|=|\tilde h'(\tilde h^{-1}(t)|$ for every $t\in(\sqrt{C_d},1)$, which is impossible since we are assuming $\tilde h\ne h$.

Recalling again~\eqref{rewrite}, we can now simply put together~\eqref{lowerinterval} and~\eqref{upperinterval} to deduce that, if~\eqref{assmpt} holds, then~\eqref{goalstep6} is true.

\medskip\step{VII}{Conclusion for the radial case.}
Putting together the preceding steps, we readily conclude the proof for the radial case. In fact, the arguments of Steps~I--VI ensure that, if there is a concave function $h:[0,M]\to[0,1]$ with $h(0)=1$ such that~\eqref{assmpt} holds, then there is another concave function, more precisely a decreasing, affine bijection $\tilde h:[0,\widetilde M]\to [0,1]$, for which~\eqref{assmpt} also holds true, with strict inequality unless $h$ is already a decreasing, affine bijection. On the other hand, a simple calculation ensures that for every decreasing, affine bijection $\tilde h:[0,\widetilde M]\to[0,1]$ inequality~\eqref{assmpt} holds as an equality. As a consequence, we deduce that the strict inequality in~\eqref{assmpt} never holds, and the equality holds if and only if $h$ is a decreasing, affine bijection of some interval $[0,M]$ onto $[0,1]$. Equivalently, inequality~\eqref{goal} is proved, and the equality cases are precisely the decreasing, affine bijections of some interval $[0,M]$ onto $[0,1]$.

\medskip\step{VIII}{Conclusion for the general case.}
We can now conclude the proof. Let $h:A\to[0,1]$ be as in the claim. The right inequality in~\eqref{thingendim}, together with its equality cases, is obvious. We now concentrate ourselves on the left inequality.

Let $h^*:B\to [0,1]$ be the radially symmetric decreasing rearrangement of $h$, defined on the $(d-1)$-dimensional ball $B$ centered at the origin and with the same area as $A$. The standard properties of the rearrangement imply that
\be\label{itsthesame}
\int_B (h^*)^3(x)\,dx=\int_A h^3(x)\,dx\,,\qquad\int_B h^*(x)\,dx=\int_A h(x)\,dx\,,
\ee
and moreover it is well-known that $h^*$ is also concave. Indeed, for every $t\ge0$ let us call
\[A_t=\big\{x\in A\ :\ h(x)>t\big\}\,.\]
Concavity of $h$ immediately yields that for every $t_1,\,t_2\ge0$ and every $\sigma\in[0,1]$ one has
\[\sigma A_{t_1} + (1-\sigma) A_{t_2} \subseteq A_{\sigma t_1+(1-\sigma)t_2}\,.\]
Brunn-Minkowski inequality gives then
\[
\big|A_{\sigma t_1+(1-\sigma)t_2}\big|^{1/d}\ge\big|\sigma A_{t_1}+(1-\sigma)A_{t_2}\big|^{1/d}
\ge\sigma\big|A_{t_1}\big|^{1/d}+(1-\sigma)\big|A_{t_2}\big|^{1/d}\,,
\]
and this last inequality is precisely the concavity of $h^*$. Since $h^*$ is concave, by Step~VII we know that~\eqref{goal} holds true, and by~\eqref{itsthesame} this means that the left inequality in~\eqref{thingendim} is also true. To conclude, we only have to study the equality cases in the left inequality in~\eqref{thingendim}.

Let then $h:A\to [0,1]$ be a function as in the claim for which the left inequality in~\eqref{thingendim} holds as an equality. As a consequence, also~\eqref{goal} holds for $h^*$ as an equality, thus the $1$-dimensional shape of $h^*$ is a decreasing, affine bijection of some interval $[0,M]$ onto $[0,1]$. Let $P$ be any point inside $A$ such that $h(P)=1$. Then, the cone function $h_C$ (in the sense of Definition~\ref{defconefct}) corresponding to the set $A$ and the point $P$ is smaller than $h$ by definition. On the other hand, (the $1$-dimensional shape of) its radially symmetric decreasing rearrangement $h_C^*$ is a decreasing, affine bijection of $[0,M]$ onto $[0,1]$, so we deduce $h^*=h_C^*$. And finally, as we already observed that $h_C\leq h$, we obtain that $h_C=h$, hence $h$ is in fact a cone function. Since, conversely, the left inequality in~\eqref{thingendim} clearly holds as an equality for every cone function, the proof is concluded.
\end{proof}

%From Proposition~\ref{pconvthin} we obtain immediately the following estimate.
%
%\begin{coro}\label{corothinconv}
%For every convex thin domain $\O_\eps$ we have
%\[\liminf_{\eps\to0}F_1(\O_\eps)\ge\frac{\pi^2}{2(d+1)(d+2)}\;.\]
%In addition, the bound above is sharp and achieved for instance when $A$ is the $d-1$- ball $B_1$ and $h(s)=1-|s|$.
%\end{coro}

%%%%%%%%%%%%%%%%%%%%%%%%%%%%%%%%%%%%%%%%%%%%%%%%%%
\section{The case $d=1$\label{sdone}}

The case $d=1$ allows more explicit calculations, since any open set $\O\subset\R^1$ is the union of disjoint open intervals $\O_k$. For each such interval we have
\begin{align*}
\lambda(\O_k)=\frac{\pi^2}{|\O_k|^2}\,, && T(\O_k)=\frac{|\O_k|^3}{12}\,,
\end{align*}
so that
\[
F_q(\O)=\frac{\pi^2}{12^q}\frac{\big(\sum_k|\O_k|^3\big)^q}{\big(\max_k|\O_k|\big)^2\big(\sum_k|\O_k|\big)^{3q-2}}\,.
\]
Setting $a_k=|\O_k|$ and denoting by $a$ the sequence $(a_k)_k$ we are reduced to study the quantity
\[
G_q(a)=\frac{\big(\sum_k a_k^3\big)^q}{\big(\max_k a_k\big)^2\big(\sum_k a_k\big)^{3q-2}}\,.
\]
With no loss of generality we may fix $\|a\|_\infty=a_1=1$, so $a_k\le1$ for all $k\geq 2$ and we may write the quantity above as
\[
G_q(a)=\frac{\big(1+\sum_{k\ge2}a_k^3\big)^q}{\big(1+\sum_{k\ge2}a_k\big)^{3q-2}}\,.
\]

Some easy calculations give the following bounds.

\begin{align*}
\hbox{If $q\le2/3$\,,}
&&
\left\{\begin{array}{ll}
\min G_q(a)=1\,,&\hbox{reached with $a_k=0$ for all $k\ge2$\,;}\\
\sup G_q(a)=+\infty\,,&\begin{aligned}&\hbox{asymptotically reached with $a_k=1$}\\
&\hbox{for all $2\le k\le N$, and $N\gg1$\,.}\end{aligned}
\end{array}\right.
\end{align*}

\begin{align*}
\hbox{If $2/3<q<1$\,,}
&&
\left\{\begin{array}{ll}
\inf G_q(a)=0\,,&\begin{aligned}&\hbox{asymptotically reached with $a_k=\eps$}\\
&\hbox{$\forall\ 2\le k\le N$, and $\eps^{-1}\ll N\ll\eps^{-3}$\,;}\end{aligned}\\
\sup G_q(a)=+\infty\,,&\begin{aligned}&\hbox{asymptotically reached with $a_k=1$}\\
&\hbox{for all $2\le k\le N$, and $N\gg1$\,.}\end{aligned}
\end{array}\right.
\end{align*}

\begin{align*}
\hbox{If $q\geq 1$\,,}
&&
\left\{\begin{array}{ll}
\inf G_q(a)=0\,,&\begin{aligned}&\hbox{asymptotically reached with $a_k=\eps$}\\
&\hbox{$\forall\ 2\le k\le N$, and $\eps^{-1}\ll N\ll\eps^{-3}$\,;}\end{aligned}\\
\max G_q(a)=1\,,&\hbox{reached with $a_k=0$ for all $k\geq 2$}\,.
\end{array}\right.
\end{align*}

In order to draw the Blaschke-Santal\'o diagram for the case $d=1$ we need the following lemma.

\begin{lemm}\label{realnum}
Let $p>1$ and let $A,B$ be nonnegative real numbers; then the following conditions are equivalent.
\begin{itemize}
\item[(a)]There exist $a_k\in[0,1]$ such that
\begin{align*}
A=\sum_ka_k^p\,, && B=\sum_ka_k\,.
\end{align*}
\item[(b)]The numbers $A,B$ satisfy the inequality
\[
A\le[B]+\big(B-[B]\big)^p
\]
where $[B]$ denotes the integer part of $B$.
\end{itemize}
\end{lemm}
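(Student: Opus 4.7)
The plan is to prove the two implications independently, both resting on the observation that since $p>1$, the function $t\mapsto t^p$ is strictly convex on $[0,1]$, so under the constraint $\sum_k a_k=B$ with $a_k\in[0,1]$ the quantity $\sum_k a_k^p$ is maximised by pushing mass as far to the extremes $\{0,1\}$ as possible. Write $f(B):=[B]+(B-[B])^p$, which is continuous on $[0,\infty)$ since $f(m^-)=(m-1)+1=m=f(m)$ for every positive integer $m$.

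For (a)$\Rightarrow$(b), I would first handle the case of finitely many $a_k$. On the convex polytope $\{a\in[0,1]^n:\sum_k a_k=B\}$, the convex function $a\mapsto\sum_k a_k^p$ attains its maximum at an extreme point, and every extreme point has all coordinates in $\{0,1\}$ except at most one; the configuration is therefore forced to be $[B]$ ones, one value $B-[B]$, and zeros elsewhere, giving exactly $f(B)$. An equivalent ``two-coordinate swap'' argument also works: if two indices $j,k$ have values in $(0,1)$, replace $(a_j,a_k)$ by $(\min(1,a_j+a_k),\,a_j+a_k-\min(1,a_j+a_k))$; this preserves the sum and strictly increases $\sum_k a_k^p$ by convexity. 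To treat countable sequences, truncate, apply the finite bound to $A_n=\sum_{k\le n}a_k^p$ and $B_n=\sum_{k\le n}a_k$, and pass to the limit using continuity of $f$.

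For (b)$\Rightarrow$(a), I would construct the sequence explicitly. If $B=0$ then $A=0$ is forced, so take $a_k\equiv 0$. If $B>0$, fix an integer $n>B$ and consider
\[
S_n=\Big\{(a_1,\dots,a_n)\in[0,1]^n:\textstyle\sum_k a_k=B\Big\},
\]
which is compact, connected and non-empty. The continuous function $g(a)=\sum_k a_k^p$ attains its maximum $f(B)$ on $S_n$ by the argument of the previous paragraph, and, by Jensen's inequality applied to $t\mapsto t^p$, its minimum $B^p/n^{p-1}$ at the uniform point $a_k=B/n$. Since $S_n$ is connected, $g(S_n)$ is an interval, and hence contains $[B^p/n^{p-1},f(B)]$. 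Choosing $n$ large enough that $B^p/n^{p-1}\le A$ (possible because $p>1$ forces $B^p/n^{p-1}\to 0$), the intermediate value theorem yields some $a\in S_n$ with $g(a)=A$; padding by zeros delivers the required sequence.

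The main obstacle I anticipate is of a bookkeeping nature: the limiting argument in (a)$\Rightarrow$(b) must be carried out carefully, because $B_n\to B$ may cross integer values and $t\mapsto [t]$ is itself discontinuous; this is precisely why I isolated the continuity of $f$ at the outset.
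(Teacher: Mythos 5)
Your proof is correct and follows essentially the same route as the paper: the two-coordinate swap (equivalently, maximisation at an extreme point of the polytope) for (a)$\Rightarrow$(b), and for (b)$\Rightarrow$(a) the interpolation between the uniform configuration $a_k=B/N$ and the extremal one $([B]$ ones plus a remainder$)$, which is exactly what the paper does before invoking ``a continuity argument.'' You spell out a few details the paper leaves implicit — the continuity of $f$ across integer values and the truncation-and-limit step for countable sequences — but the underlying argument is the same.
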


\begin{proof}
$(a)\Rightarrow(b).$\ If $x,y\in[0,1]$ it is easy to see that
\[x^p+y^p\le\begin{cases}
(x+y)^p&\hbox{if }x+y\leq 1\,,\\
1+(x+y-1)^p&\hbox{if }x+y\geq 1.
\end{cases}\]
In other words, for a fixed sum $x+y$ with $x,y\in[0,1]$, we increase the quantity $x^p+y^p$ by replacing either the smallest one between $x$ and $y$ by $0$ or the bigger one by $1$. Repeating this argument for every pair of elements of the sequence $(a_k)$ we obtain what required.

$(b)\Rightarrow(a).$\ Taking $a_k=B/N$ for all $k=1,\dots,N$ we have
\begin{align*}
\sum_k a_k^p=\frac{B^p}{N^{p-1}}\,, &&\sum_k a_k=B\,.
\end{align*}
On the other hand, taking $N=[B]+1$ and
\[a_k=\begin{cases}
1&\hbox{if }k<N\\
B-[B]&\hbox{if }k=N
\end{cases}\]
we have
\begin{align*}
\sum_k a_k^p=[B]+\big(B-[B]\big)^p\,, &&\sum_k a_k=B\,.
\end{align*}
In this way, for a fixed $B=\sum_ka_k$ we can make $\sum_ka_k^p$ either arbitrarily small or equal to the bound $[B]+\big(B-[B]\big)^p$. A continuity argument concludes the proof.
\end{proof}

%%%%%%%%%%%%%%%%%%%%%%%%%%%%%%%%%%%%%%%%%%%%%%%%%%
\section{The Blaschke-Santal\'o diagrams\label{sdiag}}

The Blaschke-Santal\'o diagram for $\lambda(\O)$ and $T(\O)$ consists in plotting the subset $E$ of $\R^2$ whose coordinates are determined by $\lambda(\O)$ and $T(\O)$ for some $\O$. In order to have variables $x$ and $y$ in the interval $[0,1]$, it is convenient to normalise the coordinates by
\begin{align}\label{defxy}
x=\frac{|B_1|^{2/d}\lambda(B_1)}{|\O|^{2/d}\lambda(\O)}\,, && y=\frac{|B_1|^{(d+2)/d}T(\O)}{|\O|^{(d+2)/d}T(B_1)}\,.
\end{align}
By scale invariance $B_1$ could be replaced by any ball $B$.

In the case $d=1$ we can give the full description of the Blaschke-Santal\'o diagram.

\begin{figure}[bp]
\centering{\includegraphics[scale=1.3]{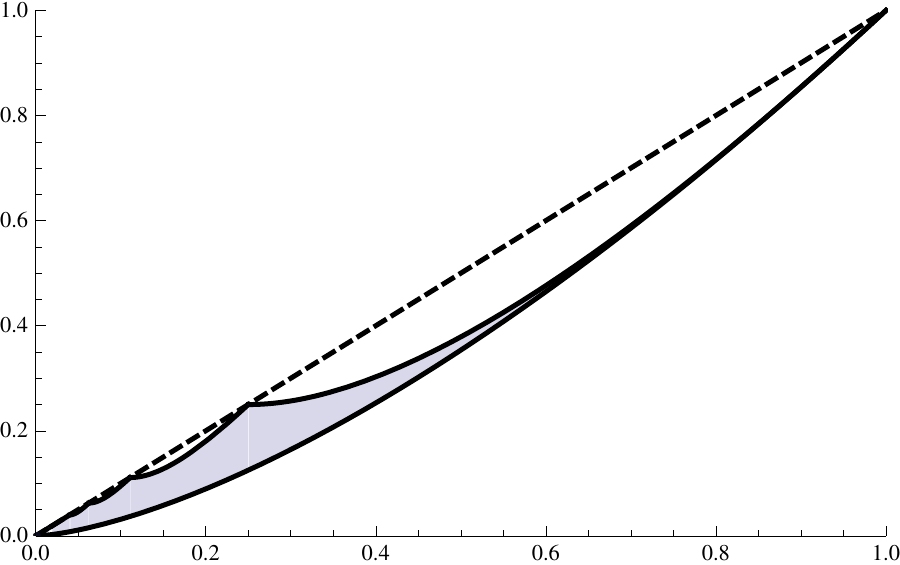}}
\caption{The Blaschke-Santal\'o diagram for $\lambda(\O)$ and $T(\O)$ in the case $d=1$.}\label{fig1}
\end{figure}

\begin{prop}\label{bsdim1}
Let $E$ be the Blaschke-Santal\'o diagram for $\lambda(\O)$ and $T(\O)$ in the case $d=1$. Then $(x,y)\in E$ if and only if
\[
x^{3/2}\le y\le x^{3/2}\Big(\big[x^{-1/2}]+\big(x^{-1/2}-\big[x^{-1/2}])^3\Big)\,.
\]
\end{prop}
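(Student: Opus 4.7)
\medskip

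The plan is to translate both sides of the claimed inequality into statements about the sequence $(a_k)$ of lengths of the connected components of $\Omega$, and then invoke Lemma~\ref{realnum} to characterise the achievable pairs.

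First I would use that every open $\Omega\subset\R$ is a countable disjoint union of open intervals of lengths $a_k=|\O_k|$, for which $\lambda(\O)=\pi^2/(\max_k a_k)^2$ and $T(\O)=\frac{1}{12}\sum_k a_k^3$. Substituting into the normalisation~\eqref{defxy}, and recalling that in dimension one $|B_1|=2$, $\lambda(B_1)=\pi^2/4$ and $T(B_1)=2/3$, one obtains the clean expressions
\[
x=\frac{(\max_k a_k)^2}{(\sum_k a_k)^2}\,,\qquad y=\frac{\sum_k a_k^3}{(\sum_k a_k)^3}\,.
\]
By the scaling relations~\eqref{scaling1}--\eqref{scaling2} we may rescale so that $\max_k a_k=1$, and reorder the sequence so that $a_1=1$. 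Setting $S=\sum_k a_k\ge1$ and $Q=\sum_k a_k^3\ge1$, we then have $x=S^{-2}$ and $y=Q/S^3$, i.e.\ $S=x^{-1/2}$ and $Q=y\,x^{-3/2}$. This reduces the problem to characterising the pairs $(S,Q)$ realisable with $a_1=1$ and $a_k\in[0,1]$ for $k\ge2$.

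The lower bound $y\ge x^{3/2}$ is then immediate: it is equivalent to $Q\ge1$, which holds trivially because $Q\ge a_1^3=1$, with equality exactly when $\O$ is a single interval. For the upper bound, I would peel off the component $a_1=1$ and apply Lemma~\ref{realnum} with $p=3$ to the remaining sequence $(a_k)_{k\ge2}\subset[0,1]$, which has $\sum_{k\ge2}a_k=S-1$ and $\sum_{k\ge2}a_k^3=Q-1$. The lemma gives
\[
Q-1\le [S-1]+\bigl(S-1-[S-1]\bigr)^3=[S]-1+\bigl(S-[S]\bigr)^3\,,
\]
hence $Q\le [S]+(S-[S])^3$, which is precisely the claimed upper bound for $y$ after substituting $S=x^{-1/2}$ and $Q=yx^{-3/2}$.

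For the converse inclusion, given $(x,y)$ with $0<x\le1$ satisfying both inequalities, set $S=x^{-1/2}\ge 1$ and $Q=yx^{-3/2}\in[1,[S]+(S-[S])^3]$. I would produce a realising sequence by fixing $a_1=1$ and then, since $Q-1\in[0,[S-1]+(S-1-[S-1])^3]$, applying the implication $(b)\Rightarrow(a)$ of Lemma~\ref{realnum} (again with $p=3$) to the target sums $S-1$ and $Q-1$ to obtain the remaining $a_k\in[0,1]$. Taking $\O$ to be the disjoint union of open intervals of lengths $a_k$ (suitably spaced) yields the required $\O$. Since the main technical input, Lemma~\ref{realnum}, is already established, the only real care needed is the bookkeeping that the $a_1=1$ component can always be split off without altering the relevant quantities; I do not foresee a substantive obstacle.
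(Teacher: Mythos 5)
Your proposal is correct and takes essentially the same approach as the paper: decompose $\Omega$ into disjoint intervals of lengths $a_k$, normalise $\max_k a_k = a_1 = 1$, express $x$ and $y$ in terms of $B=\sum_{k\ge 2}a_k$ and $A=\sum_{k\ge 2}a_k^3$ (your $S=1+B$, $Q=1+A$), and apply Lemma~\ref{realnum} with $p=3$ in both directions. The paper states this more tersely, but the computation and the role of the lemma are identical to yours.
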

\begin{proof}
Indeed, every $\O$ is the union of disjoint intervals $\O_k$ and, setting $a_k=|\O_k|$, we find
\begin{align*}
x=\frac{(\max_k a_k)^2}{\big(\sum_ka_k\big)^2}\,, && y=\frac{\sum_ka_k^3}{\big(\sum_ka_k\big)^3}\,.
\end{align*}
Since the quantities above are scaling invariant, we may choose $a_1=\max_k a_k=1$ and obtain
\begin{align*}
\left\{\begin{aligned}
x&=(1+B)^{-2}\\
y&=(1+A)(1+B)^{-3}\,,
\end{aligned}
\right.
&&
\hbox{where}
&&
\left\{
\begin{array}{l}
A=\sum\nolimits_{k\ge2}a_k^3\\[5pt]
B=\sum\nolimits_{k\ge2}a_k\,.
\end{array}
\right.
\end{align*}
The conclusion now follows by applying Lemma~\ref{realnum} with $p=3$.
\end{proof}

In the case of a dimension $d>1$ we can only provide some bounds to the Blaschke-Santal\'o diagram $E$.

\begin{prop}\label{bsdimd}
Let $E$ be the Blaschke-Santal\'o diagram for $\lambda(\O)$ and $T(\O)$ in the case of dimension $d$. Then
\[
\Big\{ x^{(d+2)/2}\le y\le x^{(d+2)/2}\Big(\big[x^{-d/2}]+\big(x^{-d/2}-\big[x^{-d/2}])^{(d+2)/d}\Big)\Big\}
\subset E\subset
\Big\{x^{(d+2)/2}\le y\Big\} \,.
\]
\end{prop}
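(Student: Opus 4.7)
The plan is to prove the two inclusions separately, following the same strategy as in the proof of Proposition~\ref{bsdim1}. The outer inclusion $E\subset\{x^{(d+2)/2}\le y\}$ is a direct rewriting of the Kohler-Jobin inequality~\eqref{kjineq}: raising both sides to the power $(d+2)/2$ gives $\lambda(\O)^{(d+2)/2}T(\O)\ge\lambda(B_1)^{(d+2)/2}T(B_1)$, and substituting the definitions~\eqref{defxy} yields $y/x^{(d+2)/2}\ge 1$.

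For the inner inclusion, I would realise the points of the claimed region by means of disjoint unions of balls. Take $\O=\bigsqcup_k B_{r_k}$ with $r_1=\max_k r_k$, normalised by scale invariance so that $r_1=1$. Using $\lambda(\O)=\lambda(B_1)/r_1^2$, $T(\O)=T(B_1)\sum_k r_k^{d+2}$ (by~\eqref{star}) and $|\O|=|B_1|\sum_k r_k^d$, setting $a_k:=r_k^d\in[0,1]$ (so that $a_1=1=\max_k a_k$), $B:=\sum_k a_k$ and $A:=\sum_k a_k^{(d+2)/d}$ gives, via~\eqref{defxy},
\[
x=B^{-2/d},\qquad y=A\,x^{(d+2)/2}.
\]
Given a target $(x_0,y_0)$ in the claimed region (necessarily with $x_0\le 1$, otherwise the region is empty, as is seen by a short direct check), one has $B=x_0^{-d/2}\ge 1$, and realising $(x_0,y_0)$ amounts to producing $A=y_0/x_0^{(d+2)/2}\in[1,[B]+(B-[B])^{(d+2)/d}]$. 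Separating off the fixed contribution $a_1=1$ and applying Lemma~\ref{realnum} with $p=(d+2)/d$ to the tail $(a_k)_{k\ge 2}$ (whose sum $B-1$ is nonnegative) shows that $A-1$ can realise any value in $[0,[B-1]+(B-1-[B-1])^{(d+2)/d}]$. A short calculation using $B\ge 1$ shows this interval coincides with $[0,[B]-1+(B-[B])^{(d+2)/d}]$, which is exactly the required range for $A-1$.

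The proof presents no real obstacle; it is a direct higher-dimensional transcription of the one-dimensional case. The two points that require a little care are (i) recognising that the ``lower envelope'' $y=x^{(d+2)/2}$ is exactly the Kohler-Jobin curve once the exponent bookkeeping is carried out, and (ii) verifying that pinning down $a_1=1$ and invoking Lemma~\ref{realnum} only on the tail produces precisely the upper envelope in the statement. The fact that the inner and outer inclusions do \emph{not} coincide for $d>1$ reflects that disjoint unions of balls need not exhaust the achievable $(\lambda,T)$-values in higher dimensions; the exact description of $E$ remains open.
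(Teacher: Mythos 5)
Your proof is correct and follows the same strategy as the paper: realise the inner region by disjoint unions of balls, parametrise by $a_k=r_k^d$, invoke Lemma~\ref{realnum} with $p=(d+2)/d$, and read off the outer inclusion as a restatement of the Kohler-Jobin inequality~\eqref{kjineq}. The only difference is presentational: the paper normalises $a_1=1$ and lets $A,B$ denote the tail sums over $k\ge2$ (so $x=(1+B)^{-2/d}$, $y=(1+A)x^{(d+2)/2}$ and Lemma~\ref{realnum} applies directly to $(A,B)$), while you keep the $k=1$ term inside $A,B$ and then peel it off, checking the integer-part identity $[B-1]+(B-1-[B-1])^p=[B]-1+(B-[B])^p$ for $B\ge1$ — the same computation, just done at a different stage.
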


\begin{proof}
Considering domains $\O$ which are union of disjoint balls $\O_k$ with radii $r_k$, and arguing as above, we find:
\begin{align*}
x=\frac{(\max_k r_k)^2}{\big(\sum_k r_k^d\big)^{2/d}}\,, && y=\frac{\sum_k r_k^{d+2}}{\big(\sum_k r_k^d\big)^{(d+2)/d}}\,.
\end{align*}
Using again the scaling invariance, we may choose $r_1=\max_k r_k=1$ and, setting $a_k=r_k^d$, we obtain
\begin{align*}
\left\{\begin{aligned}
x&=(1+B)^{-2/d}\\
y&=(1+A)(1+B)^{-(d+2)/d}\,,
\end{aligned}
\right.
&&
\hbox{where}
&&
\left\{
\begin{array}{l}
A=\sum\nolimits_{k\geq 2}a_k^{(d+2)/d}\\[5pt]
B=\sum\nolimits_{k\geq2}a_k\,.
\end{array}
\right.
\end{align*}
The left inclusion now follows by applying Lemma~\ref{realnum} with $p=(d+2)/d$. The right inclusion, instead, is equivalent to say that $T(\Omega)\lambda(\Omega)^{(d+2)/2}$ is minimized by the ball, which is the Kohler-Jobin inequality~(\ref{kjineq}).
\end{proof}

\begin{figure}[htbp]
\centering{\includegraphics[scale=1.3]{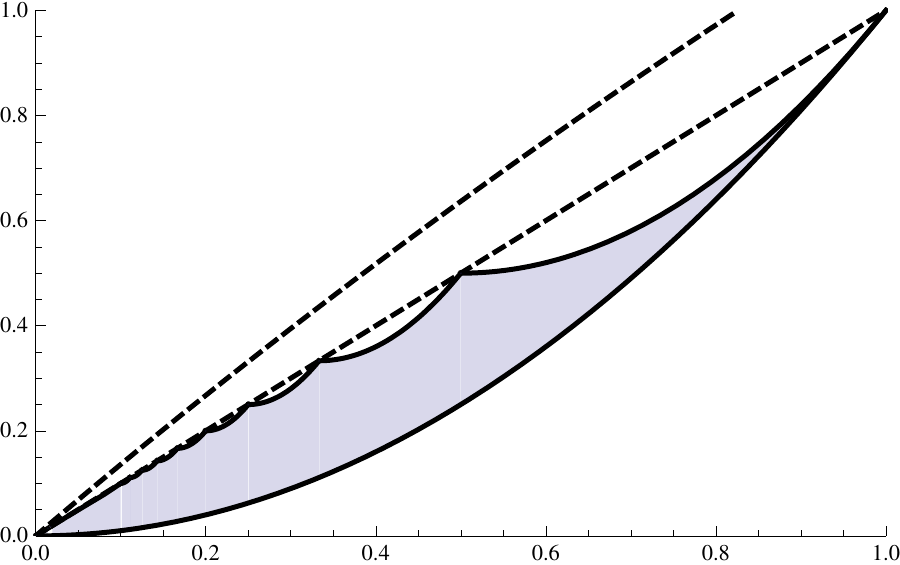}}
\caption{The colored region is the lower bound of the Blaschke-Santal\'o diagram $E$ for $\lambda(\O)$ and $T(\O)$ in the case $d=2$. The upper dashed line is the upper bound for the set $E$ given by~(\ref{donethere}).}\label{fig2}
\end{figure}

%%%%%%%%%%%%%%%%%%%%%%%%%%%%%%%%%%%%%%%%%%%%%%%%%%
\section{Further remarks and open questions\label{sconc}}

The optimisation problems we presented are very rich, and several questions remain open. below we list some of them, together with some comments.

\bigskip

{\bf Problem 1. }We have seen that when $q>1$
\[\sup\Big\{F_q(\O)\ :\ \O\textup{ open in }\R^d,\ |\O|<\infty\Big\}<+\infty\,.\]
It would be interesting to establish if the supremum above is actually a maximum or if it is only achieved asymptotically by a maximising sequence $\O_n$. Note that the class of competing domains is the whole class of open sets in $\R^d$ with finite measure, without any other geometric or topological restriction.

\bigskip

{\bf Problem 2. }When $q\le2/(d+2)$ the Kohler-Jobin inequality~\eqref{kjineq} implies that
\[\min\Big\{F_q(\O)\ :\ \O\textup{ open in }\R^d,\ |\O|<\infty\Big\}=F_q(B),\]
where $B$ is any ball in $\R^d$. It would be interesting to prove or disprove a kind of {\it reverse Kohler-Jobin inequality}, that is the existence of another threshold $Q_d>1$ such that for $q\ge Q_d$
\[\max\Big\{F_q(\O)\ :\ \O\textup{ open in }\R^d,\ |\O|<\infty\Big\}=F_q(B)\,.\]
It is easy to see that if the ball $B$ maximises the shape functional $F_q$ for a certain $q>1$, then $B$ also maximises $F_p$ for every $p>q$. Indeed, we have
\[F_p(\O)=F_q(\O)\Big(\frac{T(\O)}{|\O|^{(d+2)/d}}\Big)^{p-q}\le F_q(B)\Big(\frac{T(B)}{|B|^{(d+2)/d}}\Big)^{p-q}=F_p(B)\,,\]
where we have used the fact that the quantity $T(\O)\,|\O|^{-(d+2)/d}$ is maximal for $\O=B$.

\bigskip

{\bf Problem 3. }
From Figure~\ref{fig2} we see that the Blaschke-Santal\'o set $E$ is bounded from below by the Kohler-Jobin line with equation $y=x^{(d+2)/2}$, and every point on this line can be asymptotically reached by a sequence of domains $\O_n$ made by the union of disjoint balls. On the contrary, the upper bound of $E$ is less clear: we believe that a continuous curve of equation $y=S(x)$ should exist such that
\[\overline E=\big\{(x,y)\in\R^2\ :\ x\in[0,1],\ x^{(d+2)/2}\le y\le S(x)\big\}\,.\]
The proof of this fact is at the moment missing and would require that the set $E$ is {\it convex horizontally} and {\it convex vertically} in the sense that the intersections of $E$ with horizontal and vertical straight lines are segments.

\bigskip

{\bf Problem 4. }It is not clear what may happen if we consider intermediate classes of domains as for instance in the case $d=2$
\[\A_{sc}=\big\{\O\hbox{ simply connected }\big\}\,,\]
or more generally
\[\A_{sc,N}=\big\{\O\hbox{ with a topological genus not exceeding }N\big\}\,.\]
For $q\le2/(2+d)$ we still have that a ball is a minimizer of $F_q$, while for $q>2/(2+d)$ the infimum of $F_q$ is still zero, asymptotically reached as in Proposition~\ref{finf}, eventually connecting the balls by means of very thin channels. The situation for the sup is less clear. We obviously have
\[C_d^+\le\sup\Big\{F_1(\O)\ :\ \O\in\A_{sc}\hbox{ in $\R^d$}\Big\}\le1\]
and it would be interesting to see if some of the inequalities above are strict. Similar questions arise (for the sup as well as for the inf, and in any space dimension) if we consider even smaller classes of domains as
\[\A_{ss}=\big\{\O\hbox{ star-shaped}\big\}\quad\hbox{in any dimension }d\,.\]

%%%%%%%%%%%%%%%%%%%%%%%%%%%%%%%%%%%%%%%%%%%%%%%%%%
\section{Appendix: The case $q=1$\label{sq1}}

The inequality
\begin{align*}
0<F_1(\O)<1 && \hbox{for all }\O\subset\R^d
\end{align*}
was first proven by P\'olya and Szeg\"o and can be found in~\cite{ps51} (see also Proposition 2.3 of~\cite{bbv15}). In~\cite{bfnt16} the improvement
\[
F_1(\O)\le1-\frac{2d\omega^{2/d}_d}{d+2}\frac{T(\O)}{|\O|^{1+2/d}}
\]
is proved, together with the fact that the supremum of $F_1$ is actually 1. We give here a quick proof that uses the well known fact that if $d\ge2$ the closure under $\Gamma$-convergence of the Dirichlet energies $\int_\O|\nabla u|^2\,dx$ defined on $H^1_0(\O)$, with $\O\subset D$, with $D$ a fixed bounded Lipschitz domain of $\R^d$, consists of all functionals of the form
\[\int_D|\nabla u|^2\,dx+\int_D u^2\,d\mu\]
where $\mu$ runs among all {\it capacitary measures} on $D$, that is nonnegative Borel measures, possibly taking the value $+\infty$, that vanish on all sets of capacity zero (see for instance~\cite{bubu05}). In particular, all the measures of the form $c\,dx$ with $c>0$ can be reached by limits of domains $\O_n$. In addition, both the eigenvalues and the torsional rigidity are continuous for the convergence above.

In order to prove that the supremum of $F_1$ is $1$ is then sufficient to show that
\[
\sup_{c,D}\bigg[\frac{\lambda_c(D)T_c(D)}{|D|}\bigg]\ge1\,,
\]
where
\[\begin{split}
&\lambda_c(D)=\min\bigg\{\int_D|\nabla u|^2\,dx+c\int_D u^2\,dx\ :\ u\in H^1_0(D),\ \int_D u^2\,dx=1\bigg\},\\
&T_c(D)=\max\bigg\{\bigg[\int_D u\,dx\bigg]^2\bigg[\int_D|\nabla u|^2\,dx+c\int_D u^2\,dx\bigg]^{-1}\ :\ u\in H^1_0(\O)\setminus\{0\}\bigg\}\,.
\end{split}\]
We have immediately $\lambda_c(D)=c+\lambda(D)$. In order to estimate $T_c(D)$ from below, let $D$ be unit ball in $\R^d$ centered at the origin, let $\delta>0$ be fixed, and let $u_\delta\in H^1_0(D)$ be the function
\[u_\delta(x)=
\begin{cases}
1&\hbox{if }|x|\le 1-\delta,\\
(1-|x|)/\delta&\hbox{if }|x|>1-\delta\,.
\end{cases}\]
Then
\[\begin{split}
T_c(D)
&\geq\Big[\int_D u_\delta\,dx\Big]^2\Big[\int_D|\nabla u_\delta|^2\,dx+c\int_D u_\delta^2\,dx\Big]^{-1}\\
&\geq\Big[\omega_d(1-\delta)^d\Big]^2\Big[\delta^{-2}\omega_d\big(1-(1-\delta)^d\big)+c\omega_d\Big]^{-1}
\geq\omega_d(1-\delta)^{2d}\Big[\delta^{-2}+c\Big]^{-1}\,,
\end{split}\]
so that
\[
\frac{\lambda_c(D)T_c(D)}{|D|}\ge\frac{(c+\lambda(D))(1-\delta)^{2d}}{\delta^{-2}+c}\,.
\]
As $c\to+\infty$ we obtain
\[
\sup_c\bigg[\frac{\lambda_c(D)T_c(D)}{|D|}\bigg]\ge(1-\delta)^{2d}\,.
\]
Finally, letting $\delta\to0$ we have what was to be proved.

\bigskip

\noindent{\bf Acknowledgments.} MvdB was supported by The Leverhulme Trust through Emeritus Fellowship EM-2018-011-9. The work of GB and AP is part of the project 2017TEXA3H {\it``Gradient flows, Optimal Transport and Metric Measure Structures''} funded by the Italian Ministry of Research and University.

\bigskip

\bigskip
{\small\noindent
Michiel van den Berg:
School of Mathematics, University of Bristol\\
Fry Building, Woodland Road\\
Bristol BS8 1UG - UK\\
{\tt M.vandenBerg@bristol.ac.uk}\\
{\tt http://www.maths.bris.ac.uk/~mamvdb/}

\bigskip\noindent
Giuseppe Buttazzo:
Dipartimento di Matematica,
Universit\`a di Pisa\\
Largo B. Pontecorvo 5,
56127 Pisa - ITALY\\
{\tt giuseppe.buttazzo@dm.unipi.it}\\
{\tt http://www.dm.unipi.it/pages/buttazzo/}

\bigskip\noindent
Aldo Pratelli:
Dipartimento di Matematica,
Universit\`a di Pisa\\
Largo B. Pontecorvo 5,
56127 Pisa - ITALY\\
{\tt aldo.pratelli@dm.unipi.it}\\
{\tt http://pagine.dm.unipi.it/pratelli/}}

\end{document}